\documentclass{article}

\usepackage{array,diagbox}

\usepackage{pinlabel}
\usepackage{graphicx}
\usepackage{amsmath}
\usepackage{amssymb}
\usepackage{amsthm}
\usepackage{color,colortbl}
\usepackage{hyperref}
\usepackage[english]{babel}
\usepackage{epstopdf}   
\usepackage{cancel}
\usepackage[dvipsnames]{xcolor}

\usepackage{array,multirow, multicol, diagbox}

\usepackage[all,cmtip]{xy}
\usepackage{tikz-cd} 

\definecolor{lightcopper}{rgb}{.93, .76, .58}

\input epsf.tex
\epstopdfsetup{suffix=}
\DeclareGraphicsRule{.eps}{pdf}{.pdf}{`epstopdf #1}

\newcommand{\Rone}{\mathbb{Z}}
\newcommand{\Rmor}[1]{\mathbb{Z}^{#1}}
\newcommand{\Tone}[1]{\mathbb{Z}_{#1}}


\newtheorem{theorem}{Theorem}
\newtheorem{proposition}[theorem]{Proposition}
\newtheorem{corollary}[theorem]{Corollary}

\newtheorem{definition}[theorem]{Definition}
\newtheorem{example}{Example}
\newtheorem{remark}[theorem]{Remark}
\newtheorem{notation}[theorem]{Notation}

\renewenvironment{proof}[1][Proof]{\textit{#1.} }
{\hfill \rule{0.5em}{0.5em}}

\parskip=0.3cm
\parindent=0cm

\begin{document}

\title{New torsion patterns in Khovanov homology}

\author{R. D\'{\i}az and P. M. G. Manch\'on
\footnote{R. Díaz is partially supported by Spanish Research Project PID2020-114750GB-C32/AEI/10.13039/501100011033. P. M. G. Manchón is partially supported by Spanish Research Project PID2020-117971GB-C21.}}
\maketitle


\begin{abstract}
In \cite{RPF} we found some patterns in link diagrams that give rise to torsion elements of order two in their Khovanov homology. In this paper we extend these results by providing new torsion patterns. Many of the torsion elements found in this way have the same homological and quantum degrees; we identify a type of submodules of the Khovanov chain complex that allows us to prove that most of these torsion elements living in the same Khovanov module are really different.

We use the results of this paper together with those in \cite{RPF} to find all the torsion elements in many small twists knots. In addition, we apply them to determine torsion elements in some families of pretzel links, closures of braids with three strands and rational links.
\end{abstract}

\textbf{Keywords:} \emph{Khovanov homology, torsion, enhanced state, even module, periphery number}

\textbf{MSC Class:} \emph{57K10, 57K18.}

\section{Introduction}
This paper is a natural continuation of \cite{RPF}, where we found a pattern of torsion in Khovanov homology. Here we extend these results and introduce the concept of even module, which will be useful to prove that certain elements are not exact and, in addition, to distinguish from
each other the torsion elements found in both papers. The results shown here are, in a sense, complementary to those in~\cite{RPF}.  

Understanding how torsion appears and what geometric meaning it has in the Khovanov homology of links and knots is a relevant problem. In 2004 Shumakovitch \cite{conjectureShumakovitch} conjectured that all links (except the trivial knot, the Hopf link and their disjoint unions and connected sums) have torsion. In \cite{Asaeda} Asaeda and Przytycki proved that certain semi-adequate links have torsion of order two if, roughly speaking, the $A$-smoothing of the semi-adequate diagram has a cycle of order odd or even, the torsion appearing then in the penultimate or antepenultimate quantum degree respectively. They also proved that alternating links other than the Shumakovitch exceptions have torsion of order two. Many other papers have dealt with the problem of torsion in the Khovanov homology since then (for example \cite{Helme-Guizon}, \cite{PrzytyckiSazdanovic}, \cite{LowranceSazdanovic}, \cite{Mukherjee}, \cite{ShumakovitchThinLinksHaveTorsionOnlyOfOrderTwo}, \cite{Chandler}). 

In the present paper we detect torsion elements of order two when a link has a diagram $D$ and a certain smoothing $sD$ of $D$ contains what we will call (blue) ladders (Theorem~\ref{TheoremGreat}). Basically, a ladder appears when $A$-smoothing two twisted strands with positive crossings. Some extra conditions must be satisfied, one of them involving what we will call periphery number of a ladder, a number that tells us whether or not the two extreme arcs that appear when breaking the ladder, belong to the same circle. The proof of Theorem~\ref{TheoremGreat} will be divided into two parts: first we will construct families of chains $X$ such that $dX=2V$ for a certain chain $V$. Then, by using the concept of even module, we will check that~$V$ is not an exact element. Corollary~\ref{CorollaryOnlyOneLadder} guarantees the presence of torsion elements under more relaxed, easier-to-verify assumptions.

Theorem~\ref{TheoremGreat} will provide, in general, a large amount of torsion elements of order two, many of them with the same homological and quantum degrees, hence elements of the same Khovanov homology module. So, a natural question arises: are these torsion elements the same one or are they different? In Theorem~\ref{TheoremDistinguir} we will use again even modules, now to prove that, in general, the torsion elements obtained are different, except for some very particular cases.  

Theorem 3.2 in \cite{RPF} provided another collection of torsion elements for what we called monocircular diagrams of type $D(g,h)$, with a certain extra condition. Again, we will prove that these torsion elements are different from each other, except for some very particular cases. Finally, even modules will be used to show that both collections of torsion elements, those in \cite{RPF} and these ones in the present paper, are disjoint. Both results are collected in Propositions~\ref{PropositionComparacionAnteriores} and \ref{PropositionComparacionAnterioresNuevos}.

Our approach has the benefit of constructing explicitly the torsion elements, as in \cite{RPF}. In \cite{Kindred}, the problem of finding explicit chains that define non-zero elements in the Khovanov homology is addressed. The elements found in \cite{Kindred} are some sort of traces defined as an alternating sum of enhancements of a unique Kauffman state. The torsion elements found in this paper are linear combinations of different enhanced states. Moreover, by contrast to what happens in \cite{Asaeda}, our torsion elements can be found in any homological degree, and they are not restricted to semi-adequate diagrams. 

We will describe many examples of application of our results. These will cover families of pretzel links and rational links, and many closures of braids with three strands. For example, we will see that a rational link $D(a_1, \dots, a_m)$ with each entry $a_i$ even and one of them greater than two, has at least $2^{-m}\prod_{i=1}^m a_i-1$ different torsion elements (although really larger lower bounds can be given). Similar lower bounds are also obtained for the other mentioned families of links, exemplifying the abundance of torsion. In \cite{Sevilla}, for any link closure of a positive braid with three strands, the authors compute the homology modules corresponding to the first four columns and the last three rows in the usual Khovanov table. Moreover, the exact part of the Khovanov table that they can determine depends on the infimum of the braid, a parameter appearing in its left Garside normal form (see \cite{Sevilla} for details). However, a top right part of the table remains to be unknown. We will see that, in many examples, our results determine the presence of torsion in some cells of this unknown part of the Khovanov table. 

The paper is organized as follows: in Section~\ref{SectionKhovanov} we briefly review the combinatorial definition of Khovanov homology due to Viro, which we use later. Readers familiar with the notation used in \cite{RPF}, particularly the homological and quantum degrees $i$ and $j$, can skip this section. In Section~\ref{SectionLadders} we introduce the concepts of (blue) ladder and periphery number of a ladder and we find some chains whose differentials are twice another ones (Proposition \ref{PropositiondX=2V}). In Section~\ref{SectionEvenModules} we introduce the even modules. These modules will be used in Section~\ref{SectionMain} to prove our main result, Theorem~\ref{TheoremGreat}, about detecting explicit torsion elements. In Section~\ref{SectionDistinguishing} we address the problem of distinguishing the torsion elements found (Theorem~\ref{TheoremDistinguir}). In Section~\ref{SectionMonocircularRevisited} we revisit monocircular diagrams; Proposition~\ref{PropositionComparacionAnterioresNuevos} shows that the collection of torsion elements obtained in the present paper is disjoint from that obtained in \cite{RPF}. The last section is dedicated to show different examples.

\section{Khovanov homology} \label{SectionKhovanov}
The Khovanov homology of knots and links was introduced by Mikhail Khovanov at the end of last century (\cite{Khovanov}, \cite{Bar}). In~\cite{Viro} Viro interpreted it in terms of enhanced states of diagrams. We will use Viro's point of view, with some simplifications of the homological and quantum (or polynomial) indexes taken from~\cite{DasbachLowrance}. 

Let $D$ be an oriented diagram of an oriented link $L$, with $p$ positive crossings (\includegraphics[scale=0.1]{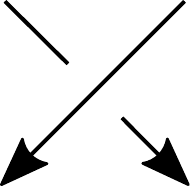}) and $n$ negative crossings (\includegraphics[scale=0.1]{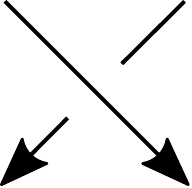}). Let $w(D)=p-n$ be the writhe of $D$.  Let $\text{Cro}(D)$ be the set of crossings of $D$. A (Kauffman) state $s$ of $D$ is a map $s:\text{Cro}(D)\rightarrow \{A,B\}$. If we smooth each crossing of $D$ according to the label of $s$ in the crossing, as shown in Figure~\ref{FigureCrossingSmoothing}, we obtain a set $sD$ with $|sD|$ disjoint simple curves, called the circles of $sD$ (see Figure~\ref{FigureEjemploEstadoRealzado}). We will draw a small chord or {\it scar} (blue if the label is $A$, red if $B$) to remember which was the state. An enhanced state is a pair $(s,e)$ where $s$ is a state and $e$ is an assignment of signs, $-$ or $+$, to each circle of $sD$. For short, we usually write just $s$ instead of $(s,e)$ to refer to a particular enhanced state. Let $\theta(s)$ be the number of circles with sign $+$ minus the number of circles with sign $-$. We then define the homological degree $i(s)$ of an enhanced state $s$ as the number of $B$-labels of the underlying Kauffman state $s$, and its quantum (or polynomial) degree as $j(s)=i(s)+\theta(s)$. 

\begin{figure}[ht!]
	\labellist
	\pinlabel {$A$} at 140 60
	\pinlabel {$B$} at 310 60
	\endlabellist
	\begin{center}
		\includegraphics[scale=0.4]{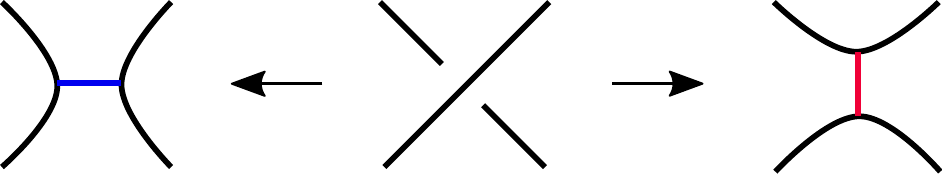}
	\end{center}
	\caption{Smoothing of a crossing according to the label.}\label{FigureCrossingSmoothing}
\end{figure}

\begin{figure}[ht!]
	\labellist
	\pinlabel {$A$} at 50 135
	\pinlabel {$A$} at 193 130
	\pinlabel {$B$} at 125 -8
	\pinlabel {$-$} at 470 140
	\pinlabel {$-$} at 390 50
	\pinlabel {$sD$} at 350 150
	\endlabellist
	\begin{center}
		\includegraphics[scale=0.3]{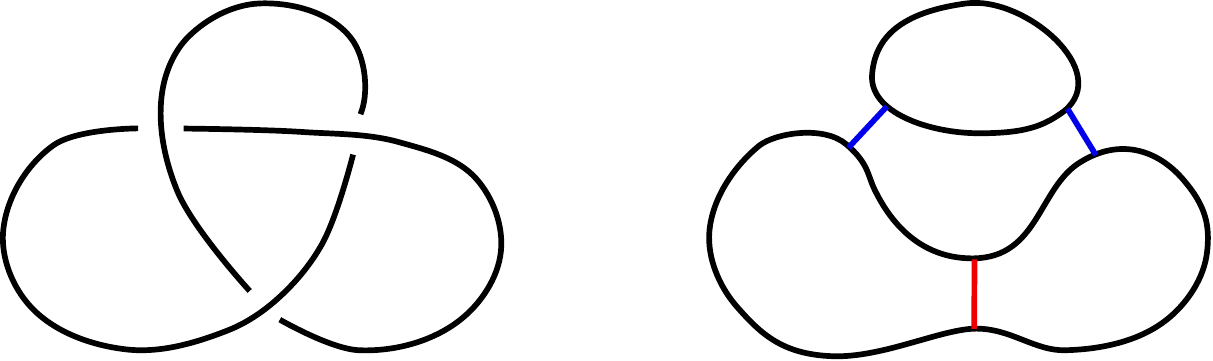}
	\end{center}
	\caption{Enhanced state $s$ with $i(s)=1$, $\theta(s)=-2$ and $j(s)=-1$, hence $s\in C^{1,-1}(D)$. Here $|sD|=2$.} \label{FigureEjemploEstadoRealzado}
\end{figure}

Let $s$ be an enhanced state with $i(s)=i$ and $j(s)=j$. An enhanced state $t$ is adjacent to $s$ if $i(t)=i(s)+1$ and $j(t)=j(s)$, the state $t$ assigns the same labels as $s$ except in one crossing $x=x(s,t)$, where $s(x)=A$ and $t(x)=B$, and $t$ assigns the same signs as $s$ to their common circles. The crossing $x(s,t)$ will be called the change crossing from $s$ to $t$. Passing then from $sD$ to $tD$ can be realized by merging two circles into one, or splitting one circle into two. Affected circles 
are those touching the crossing $x(s,t)$. The possibilities for the signs of these circles, according to the previous definition, are shown in Figure~\ref{FigurePossibilities}.
\begin{figure}[ht!]
	\begin{center}
		\includegraphics[scale=0.5]{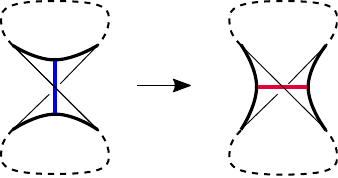} 
		\qquad \qquad
		\includegraphics[scale=0.5]{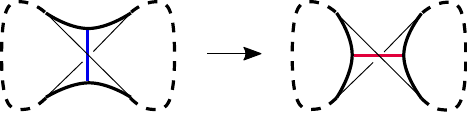}
	\end{center}
	\caption{Possible mergings: $(+,+)\mapsto +$, $(+,-)\mapsto -$ and $(-,+)\mapsto -$. Possible splittings: $+\mapsto (+,-)$ or $+\mapsto (-,+)$ and  $-\mapsto (-,-)$.}
	\label{FigurePossibilities}
\end{figure}

Let $s$ and $t$ be two enhanced states. The incident number $i(s,t)$ of $s$ over~$t$ is defined as follows: if $t$ is adjacent to $s$, then $i(s,t)=(-1)^k$ where $k$ is the number of crossings of~$D$ where $s$ has a $B$-label, previous to the change crossing~$x(s,t)$; otherwise, $i(s,t)=0$. 

Let $R$ be a commutative ring with unit. Let $C^{i,j}(D)$ be the free module over~$R$ generated by the set of enhanced states~$s$ of~$D$ with $i(s)=i$ and $j(s)=j$. Numerate from $1$ to $n$ the crossings of $D$. Now fix an integer $j$ and consider the chain complex
$$
\ldots \longrightarrow C^{i,j}(D) \stackrel{d_i}{\longrightarrow} C^{i+1,j}(D) \longrightarrow \ldots 
$$
with differential $d_i(s)=\sum i(s,t)t$, where the sum runs over all the enhanced states $t$. It turns out that $d_{i+1}\circ d_i=0$. The corresponding homology modules over $R$,
$$
\underline{Kh}^{i,j}(D)=\frac{\text{ker} (d_i)}{\text{im}(d_{i-1})},
$$
are called the Khovanov homology of the diagram $D$ for degrees homological~ $i$ and polynomial~$j$. It turns out that the $R$-modules
$Kh^{h,q}(L) := \underline{Kh}^{i,j}(D)$ where $i=h+n$ and $j=q-p+2n$, are independent of the diagram~$D$ and the order of its crossings; they are the Khovanov homology modules of the oriented link $L$ (\cite{Khovanov}, \cite{Bar}) as presented by Viro \cite{Viro} in terms of enhanced states, and with degrees considered as in \cite{DasbachLowrance}.

One last remark is in order. If $s$ in an enhanced state, then $d(s)=\displaystyle \sum_{x\in \text{Cro}(D)} d_x(s)$ where:
\begin{itemize}
	\item  $d_x(s)=0$ if $s(x)=B$ or $s(x)=A$ and the corresponding $A$-chord in $sD$ joins two different circles, both with sign $-$,
	\item $d_x(s)=(-1)^ks_{x\to B}^{+-} + (-1)^k s_{x\to B}^{-+}$ in case of splitting of a circle $+$,
	\item $d_x(s)=(-1)^ks_{x\to B}^{--}$ in case of splitting of a circle $-$,
	\item $d_x(s)=(-1)^ks_{x\to B}^{+}$ in case of merging of two circles $+$, and
	\item $d_x(s)=(-1)^ks_{x\to B}^{-}$ in case of merging two circles $+-$ or $-+$.
\end{itemize}  
Here $s_{x\to B}$ is the Kauffman state obtained from $s$ by relabeling $s_{x\to B}(x)=~B$, and $k$ is the number of crossings $y\in \text{Cro}(D)$ previous to $x$ such that $s(y)=B$. The signs in the exponent of $s_{x\to B}$ refer to the signs of the circles touching the crossing $x=x(s,s_{x\to B})$.
Finally, we can consider $d_x(Z)$ for a chain $Z$, extending the definition by linearity.

\section{Ladders, periphery numbers and differentiation}  \label{SectionLadders}

Let $s$ be a Kauffman (unenhanced) state of a diagram $D$. Recall that $sD$ is made of some disjoint simple circles and a set of scars, blue or red (corresponding to $A$ or $B$ labels for each crossing of $D$, respectively). A (blue) {\it ladder} $H$ of $sD$ of height $h\geq 1$ is a maximal disc of $\mathbb{R}^2$ that contains exactly two arcs of the same circle or of different circles, and $h$ parallel blue scars (called {\it steps}) joining the two arcs (see Figure~\ref{FigurePeriphery}, left). Here maximal means that there is no larger disc containing an extra parallel blue scar. Cutting all the ladders and joining their extremes as in Figure~\ref{FigurePeriphery}, right, we obtain, for each ladder $H_i$, one or two circles touching it. This circle or these two circles are called the {\it periphery} of $H_i$, and the corresponding number of circles (one or two) is said to be the {\it periphery number} of $H_i$ (see Figure~\ref{FigurePeripheryNumbers} for an example where $s=s_A$, the all $A$-labels state).

\begin{figure}[ht!]
	\begin{center}
		\includegraphics[scale=0.3]{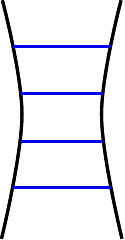}
		\hspace{2cm}
		\includegraphics[scale=0.3]{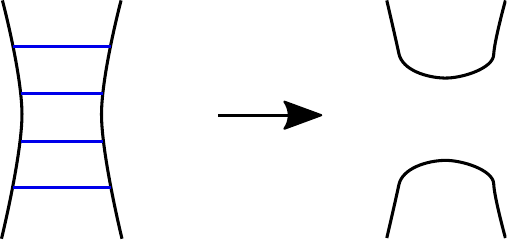}
	\end{center}
	\caption{A (blue) ladder and how to cut it to construct its periphery.}
	\label{FigurePeriphery}
\end{figure}

\begin{figure}[ht!]
	\labellist
	\pinlabel {$D$} at -5 10
	\pinlabel {$s_AD$} at 195 10
	\endlabellist	
	\begin{center}
		\includegraphics[scale=0.45]{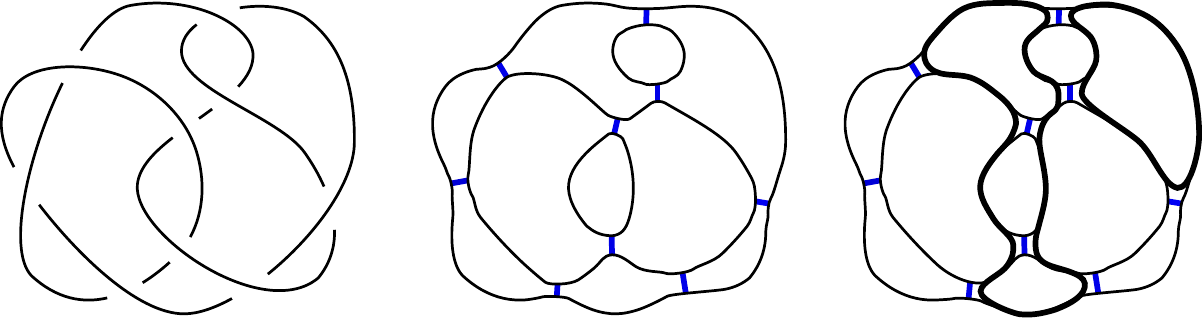}
	\end{center}
	\caption{A diagram $D$ of the knot $9_{42}$ and the corresponding $s_AD$. In $s_AD$ the ladder with three steps has periphery number one, the ladder with two steps has periphery number two; the remaining ladders have one step, and half of them have periphery number one.}
	\label{FigurePeripheryNumbers}
\end{figure}

\begin{remark} \label{RemarkPeripheryNumber}
	We remark that, if $H$ is a common ladder for $s_1D$ and $s_2D$ where $s_1$ and $s_2$ are two states of a diagram $D$, then the periphery numbers of $H$ in $s_1D$ and in $s_2D$ are equal. In fact, the periphery of $H$ remains to be the same if we substitute an $A$-label for a $B$-label or vice versa in any crossing which is not associated to the blue scars of the ladder.
\end{remark}

We now introduce some notation. Let $D$ be a link diagram and fix an initial Kauffman state $s_0$. Suppose that $H_1,\dots, H_k$ is the set of blue ladders of $s_0D$. We order the crossings of $D$ according to the order of the ladders: first the crossings corresponding to $H_1$, then to $H_2$ and so on, and finally the remaining crossings. Moreover, for each ladder the order of the crossings corresponds to the order of the steps in the ladder.  

\begin{notation}
	Let $\mu_1, \dots, \mu_k$ be integers with $0<\mu_i\leq h_i$ for each $i=1,\dots, k$. We denote by $s(\mu_1,\dots,\mu_k;\,)$ the sum of all the possible Kauffman states $s(M_{1}, \dots, M_{k};\,)$ obtained from $s_0$ by choosing $\mu_i$ steps of $H_i$, those in $M_{i}$, and changing them to red (i.e., $B$-label). Note that $s(\mu_1,\dots,\mu_k;\,)$ has $\prod_{i=1}^k\binom {h_i}{\mu_i}$ summands.
\end{notation}

Corresponding to a ladder $H_i$, any state $s= s(M_{1}, \dots, M_{k};\,)$ determines $\mu_i-1$ {\it intermediate circles} $C_i^1, \dots, C_i^{\mu_i-1}$ in $sD$, where the circle $C_i^j$ appears between the $j$-th and the $(j+1)$-th (red)  scars of $M_i$ (see Figure \ref{FigureLadderSmoothing}). In addition, the periphery circle of $H_i$ touching the first red   scar in $M_i$  is denoted by $C_i^0$ and, if the periphery number of $H_i$ is two, the other periphery circle is denoted by $C_i^{\mu_i}$.

\begin{figure}[ht!]
	\labellist
	\pinlabel {Ladder $H_i$} at -45 10	
	\pinlabel {$C_i^0$} at 190 245
	\pinlabel {$C_i^1$} at 190 205
	\pinlabel {$C_i^2$} at 190 90
	\pinlabel {$C_i^3$} at 190 45
	\pinlabel {$r_i^1$} at 150 228
	\pinlabel {$r_i^2$} at 150 109
	\pinlabel {$r_i^3$} at 150 70	
	\endlabellist	
	\begin{center}
		\includegraphics[scale=0.7]{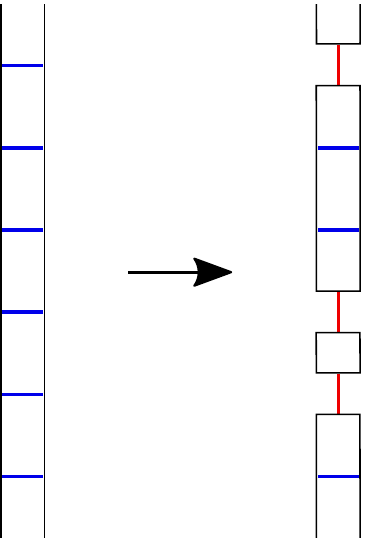}
	\end{center}
	\caption{A ladder $H_i$ with $h_i=6$ steps, and the smoothing  corresponding to $\mu_i=3$ and $M_i=\{1,4,5\}\subset \{1,2,3,4,5,6\}$. The circle $C_i^j$ rests between the $j$-th and the $(j+1)$-th red scars. Note that $C_i^{\mu_i}= C_i^{3}=C_i^0$ if the periphery number of $H_i$ is equal to one.}
	\label{FigureLadderSmoothing}
\end{figure}

\begin{notation}
	Let $D$ be a link diagram. Fix an initial Kauffman state $s_0$. Suppose that $H_1, \dots, H_k$ is the set of blue ladders of $s_0D$. Given integers $\mu_1, \dots, \mu_k$ with $0<\mu_i \leq h_i$ we denote by $s(\mu_1, \dots, \mu_k;+)$ the result of enhancing each circle of each summand of $s(\mu_1,\dots, \mu_k;\,)$ with a sign $+$. We denote by $s(\mu_1,\dots, \mu_k;C_i^{j-})$ the result of enhancing each circle of each summand of $s(\mu_1,\dots, \mu_k;\,)$ with a sign $+$, except the circle $C_i^j$ which is labelled with a sign~$-$ (see Figure~\ref{FigureSuavizados}).
\end{notation} 

\begin{figure}[ht!]
	\labellist
	\pinlabel {$D$} at -15 10
	\pinlabel {$s_0D$} at 90 10
	\pinlabel {\tiny $-$} at 218 28
	\pinlabel {\tiny $-$} at 277 28
	\pinlabel {\tiny $-$} at 337 28
	\pinlabel {\tiny $-$} at 395 28
	\pinlabel {\tiny $-$} at 455 28
	\pinlabel {\tiny $-$} at 514 28
	\endlabellist	
	\begin{center}
		\includegraphics[scale=0.59]{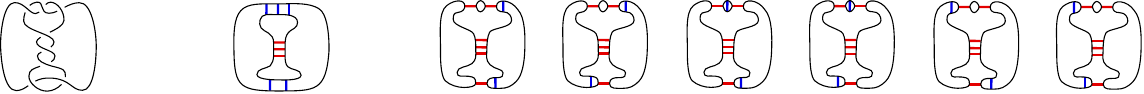}
	\end{center}
	\caption{A diagram $D$, a smoothing $s_0D$ with top ladder~$H_1$ and bottom ladder~$H_2$, and the enhanced states whose sum is the chain $s(2,1;C_1^{0-})$. 
	For example, the fifth enhanced state is $s(\{2,3\},\{1\};C_1^{0-}) = s(\{2,3\},\{1\};C_2^{0-})$.}
	\label{FigureSuavizados}
\end{figure}

\begin{proposition}\label{PropositiondX=2V}
	Let $D$ be a link diagram. Let $s_0$ be a Kauffman state such that all the blue scars are grouped into $k$ ladders of heights $h_1, \dots, h_k \geq 1$. Assume that the periphery number of each ladder is one. Let $\mu_1, \dots, \mu_k$ be integers with $0<\mu_i\leq h_i$ for all $i=1, \dots, k$. Then $d(X) = 2V$ where
	$$
	X=s(\mu_1, \dots, \mu_k;+)
	$$
	and
	$$
	V= \sum_{\scriptsize{ \begin{matrix} i\in \{1, \dots, k\,/\, \\ \mu_{i} \textnormal{ is even and } \mu_i<h_i\} \end{matrix}}}
	(-1)^{\mu_1+\dots +\mu_{i-1}} s(\dots, \mu_{i-1}, 1+\mu_i, \mu_{i+1}, \dots ;C_i^{0-}).
	$$
	Moreover, $V\not=0$ if and only if there exists $r\in \{1, \dots, k\}$ such that $\mu_r$ is even and $\mu_r<h_r$. 
\end{proposition}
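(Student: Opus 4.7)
The plan is to expand
$d(X) = \sum_s\sum_x d_x(s)$
over the enhanced states $s = s(M_1,\dots,M_k;+)$ summing to $X$ and the crossings $x$ of $D$, and then to exhibit a cyclic telescoping that collapses the result to $2V$. Since every blue scar of $s_0$ lies in one of the ladders $H_i$ and every other crossing is already $B$-labeled in $s_0$, the only contributing crossings are those $x \in H_i$ with $s(x) = A$. For such an $x$, write the red scars of $M_i$ in order of position along $H_i$ as $b_1 < \cdots < b_{\mu_i}$, so that $x$ lies between $b_j$ and $b_{j+1}$ for a unique $j \in \{0, 1, \dots, \mu_i\}$ (with the boundary conventions $b_0 = 0$ and $b_{\mu_i+1} = h_i + 1$). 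The periphery-one hypothesis implies that both rails of $H_i$ belong to a single circle of $sD$ at the height of $x$: the intermediate $C_i^j$ when $1 \le j \le \mu_i - 1$, and the periphery $C_i^0$ when $j \in \{0, \mu_i\}$. In every case this circle carries the label $+$, so $x$ produces a splitting and
$$
d_x(s) = (-1)^{k_x}\bigl(s_{x\to B}^{+-} + s_{x\to B}^{-+}\bigr), \qquad k_x = \mu_1 + \cdots + \mu_{i-1} + j,
$$
with respect to the ordering of crossings fixed just before the statement.

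The next step is to regroup the contributions by target Kauffman state $t$ with $|M'_i| = \mu_i + 1$ and $M'_l = M_l$ for $l \ne i$. Writing the ordered elements of $M'_i$ as $a'_1 < \cdots < a'_{\mu_i + 1}$, each of the $\mu_i + 1$ choices $x = a'_{j+1}$ gives a source mapping to $t$, and in $tD$ the two circles produced by the splitting are $C_i^j$ and $C_i^{j+1}$ with indices read cyclically modulo $\mu_i + 1$; this cyclic identification $C_i^{\mu_i + 1} = C_i^0$ in $tD$ is precisely where the periphery-one hypothesis enters. Setting $\varepsilon = (-1)^{\mu_1 + \cdots + \mu_{i-1}}$ and letting $q_l$ denote the enhancement of $tD$ whose only $-$ circle is $C_i^l$, the total contribution to $t$ becomes
$$
\varepsilon \sum_{j=0}^{\mu_i}(-1)^j\bigl(q_j + q_{j+1}\bigr).
$$
A telescoping in the cyclic index cancels every $q_l$ with $1 \le l \le \mu_i$ and, under $q_{\mu_i + 1} = q_0$, collapses the sum to $\varepsilon\bigl(1 + (-1)^{\mu_i}\bigr)\,q_0$. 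This equals $2\varepsilon q_0$ when $\mu_i$ is even and $0$ when $\mu_i$ is odd; without the periphery-one assumption the two leftover terms $q_0$ and $q_{\mu_i + 1}$ would be linearly independent and no factor of two would appear.

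Summing over all targets and over $i$ yields the stated formula $d(X) = 2V$, the restriction $\mu_i < h_i$ arising because $\mu_i = h_i$ leaves no $A$-scar of $H_i$ available to flip. For the non-vanishing assertion I would observe that for distinct indices $i \ne i'$ the summands $s(\dots, 1 + \mu_i, \dots; C_i^{0-})$ and $s(\dots, 1 + \mu_{i'}, \dots; C_{i'}^{0-})$ are supported on disjoint sets of Kauffman states, since they have $\mu_i + 1$ versus $\mu_i$ red scars in $H_i$; moreover each individual summand is itself a sum of pairwise distinct enhanced states with coefficient $\pm 1$, hence nonzero when present. Consequently $V = 0$ precisely when the indexing set in its definition is empty, that is, when no $r$ satisfies $\mu_r$ even and $\mu_r < h_r$. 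The principal obstacle of the plan is the careful bookkeeping of the cyclic indices at the extremes $j = 0$ and $j = \mu_i$, where the periphery-one hypothesis must be invoked to identify both ends of the ladder with the single circle $C_i^0$ of the target state.
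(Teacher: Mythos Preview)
Your proof is correct and follows essentially the same approach as the paper: both arguments note that every remaining $A$-chord is a monochord on a $+$ circle, hence splits, and then track the two resulting enhancements to the same target to see the pairwise cancellation of the intermediate circles and the doubling at the periphery circle when $\mu_i$ is even. The only difference is organizational---you group by target Kauffman state and collapse via a cyclic telescoping sum $\sum_{j=0}^{\mu_i}(-1)^j(q_j+q_{j+1})$, while the paper fixes a target enhanced state and checks that it is hit exactly twice, with equal or opposite signs according to whether $j=0$ or $j\neq 0$; you also supply an explicit disjoint-support argument for $V\neq 0$, which the paper leaves implicit.
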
  

\begin{proof}
	When differentiating $X$ we change blue chords into red. The hypothesis that $\mu_i>0$ guarantees that, 
	at any summand $S$ of $X$, every blue chord (associated to a crossing $x$) in a ladder (more precisely, a blue scar that was a blue step in a ladder in $s_0D$) is a monochord, and its supporting circle has label $+$ just because all the circles in any summand of $X$ have label $+$. Then, differentiating~$S$ with respect to a step $x$ of a ladder  corresponds to splitting a circle into two new
	ones, one labelled $+$ and the other $-$, so that $d_x(S)$ consists of two states differing only on the signs of these two circles.
	
	Thus, $dX$ is a linear combination of states of the form
	\begin{equation}\label{EquationState}
		s(M_{1},\dots, M_{k}; C_i^{j-})
	\end{equation}
	for some $i=1,\dots, k$ and (since the periphery number is one) some $0\leq j\leq \mu_{i}$, where $M_\alpha\subset \{1, \dots, h_\alpha\}$ has cardinal $1+\mu_\alpha$ if $\alpha =i$, and $\mu_\alpha$ otherwise. 
	
	Now, given an enhanced state $S$ of the form (\ref{EquationState}), we will compute its coefficient in $dX$. 
	For a fixed $i$ we have two cases depending on whether or not $j=0$:
	
	\begin{itemize}
		\item[(i)] Suppose that $j\not=0$. In this case, the circle $C_i^j$ is adjacent to the red chords $r_i^j$ and $r_i^{j+1}$. The enhanced state $S$ appears exactly twice when differentiating $X$: with sign $(-1)^{\mu_1+\dots+\mu_{i-1}+(j-1)}$ when differentiating $s(\dots, M_{i-1}, M_i\setminus\{r_i^j\}, M_{i+1}, \dots ; +)$,
		and with sign $(-1)^{\mu_1+\dots+\mu_{i-1}+j}$ when differentiating $s(\dots, M_{i-1}, M_i\setminus\{r_i^{j+1}\}, M_{i+1}, \dots ; +)$.
		Hence the two occurrences of $S$ cancel each other and $S$ does not appear in $dX$.
			
		\item[(ii)] Assume now that $j=0$. Since the periphery number of $H_i$ is one, the big circle $C_i^0$ is adjacent to both the red (vertical) scars $r_i^1$ and $r_i^{\mu_i+1}$. Then, as before, the state $S$ appears exactly twice when differentiating $X$: with sign $(-1)^{\mu_1+\dots+\mu_{i-1}}$ when differentiating $s(\dots, M_{i-1}, M_i\setminus\{r_i^1\}, M_{i+1}, \dots;+)$, and with sign $(-1)^{\mu_1+\dots+\mu_{i-1}+\mu_i}$ when differentiating $s(\dots, M_{i-1}, M_i\setminus\{r_i^{1+\mu_i}\}, M_{i+1}, \dots;+)$.
		It follows that, if $\mu_i$ is odd, again the two occurrences of $S$ cancel each other, but if $\mu_i$ is even (with $\mu_i<h_i$) then $S$ appears with coefficient $2(-1)^{\mu_1+\dots+\mu_{i-1}}$. This completes the proof. 
	\end{itemize}
\end{proof}

\section{Even modules} \label{SectionEvenModules}
	Let ${\cal M}$ be an $R$-submodule of $C(D)$ generated by a set $B$ of enhanced states. Consider the projection map $\pi_{\cal M}:C(D)\rightarrow C(D)$, defined as the unique $R$-linear map such that, for any enhanced state $Y$, $\pi_{\cal M}(Y)=Y$ if $Y\in B$ and is zero otherwise. The augmentation map $\varepsilon:C(D)\to R$ is the $R$-linear map that sends each enhanced state to 1. As in~\cite{Kindred}, maps of the form $\varepsilon \circ \pi_{\cal M}\circ d:C(D)\rightarrow R$ will be useful to prove that some enhanced states are not exact. 

\begin{definition} 
	Let $D$ be a link diagram. We say that a submodule ${\cal M}$ of $C(D)$ generated by a set $B$ of enhanced states is even if $\varepsilon(\pi_{\cal M}(d(Y)))$ is even for any chain $Y\in C(D)$.
\end{definition} 

We recall that $dY=\sum_{x\in \text{Cro}(D)} d_xY$ where $\text{Cro}(D)$ is the set of crossings of $D$.

\begin{proposition} \label{PropositionEvenExactness}
	Let ${\cal M}$ be a submodule of $C(D)$ generated by a set $B$ of enhanced states. Let $V\in C(D)$ be a chain such that exactly one summand of $V$ is in $B$. Then, if ${\cal M}$ is even, $V$ is not exact.
\end{proposition}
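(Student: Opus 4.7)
My plan is a short contrapositive argument. Suppose, for contradiction, that $V$ is exact, so that $V = dY$ for some chain $Y \in C(D)$. Because $\pi_{\mathcal{M}}$ and $\varepsilon$ are both $R$-linear, applying the composite $\varepsilon \circ \pi_{\mathcal{M}}$ to both sides yields
\[
\varepsilon\bigl(\pi_{\mathcal{M}}(V)\bigr) \;=\; \varepsilon\bigl(\pi_{\mathcal{M}}(dY)\bigr).
\]
The strategy is then to show that the two sides have different parities in $R$, producing the contradiction.

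For the right-hand side, I would invoke directly the definition of \emph{even module}: by hypothesis, $\varepsilon(\pi_{\mathcal{M}}(dZ))$ is even for every $Z \in C(D)$, and in particular for our $Z = Y$. For the left-hand side, I would write $V = \sum_{s} c_s\, s$ as a linear combination of distinct enhanced states. The projection $\pi_{\mathcal{M}}$ kills every term whose supporting enhanced state lies outside $B$, and by hypothesis exactly one such $s_0$ belongs to $B$; hence $\pi_{\mathcal{M}}(V) = c_{s_0}\, s_0$. Applying $\varepsilon$, which sends each enhanced state to $1$, yields $\varepsilon(\pi_{\mathcal{M}}(V)) = c_{s_0}$, an odd (in fact $\pm 1$) element of $R$. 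This contradicts the parity of the right-hand side, ruling out $V = dY$.

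The main (and only) obstacle is essentially terminological: making sure one reads ``exactly one summand of $V$ is in $B$'' as asserting that, when $V$ is written in the basis of enhanced states, precisely one basis element of $B$ occurs and with coefficient $\pm 1$ (as is automatic for the chains $V$ produced by Proposition~\ref{PropositiondX=2V}, whose summands all have coefficient $\pm 1$). Once this is in place, the proof requires nothing beyond linearity of $\pi_{\mathcal{M}}$ and $\varepsilon$ together with the parity condition defining an even module, so no further machinery from the paper is needed.
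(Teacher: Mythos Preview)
Your proposal is correct and follows essentially the same route as the paper's own proof: the paper simply asserts that $\varepsilon\pi_{\mathcal M}(V)=1$ by hypothesis, observes that $\varepsilon\pi_{\mathcal M}d(Y)$ is even for every $Y$ by the definition of even module, and concludes $V\neq dY$. Your only addition is the explicit discussion of why the single surviving coefficient is $\pm1$; the paper takes this as understood from the phrase ``exactly one summand of $V$ is in $B$'' (and indeed every $V$ to which the proposition is applied has all coefficients $\pm1$).
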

\begin{proof}
	By hypothesis, $\varepsilon\pi_{\cal M}(V)=1$. On the other hand, since  ${\cal M}$ is even, $\varepsilon\pi_{\mathcal M}d(Y)$ is even for any $Y\in C(D)$. It follows that $V\not= d(Y)$ for any $Y\in C(D)$, so $V$ is not exact. 
\end{proof}

\begin{proposition}\label{PropositionConstructingEvenModules}(Constructing even modules).
	Let $B_0$ be a set of (unenhanced) Kauffman states in $C^{i,j}(D)$ such that, for any state $s\in B_0$, all the red scars in $sD$ are bichords. Let $B$ be the set of enhanced states with underlined Kauffman states in $B_0$ and with just one circle labelled $-$. Then the module $\mathcal M$ generated by $B$ is even. 
\end{proposition}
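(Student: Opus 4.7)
The plan is to reduce the check to single enhanced states and then to analyse, crossing by crossing, which summands of the differential can land in~$B$. By the $R$-linearity of $d$, $\pi_{\cal M}$ and $\varepsilon$, it suffices to show that $\varepsilon(\pi_{\cal M}(d(Y)))$ is even for every enhanced state $Y$; denote its underlying Kauffman state by $s$. Expanding $d(Y)=\sum_{x\in \text{Cro}(D)} d_x(Y)$, I would argue that the contribution from each crossing $x$ is an even element of~$R$.

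The key geometric observation is that the chord at $x$ is a bichord in $s_{x\to B}D$ if and only if the same chord is a monochord in $sD$; equivalently, $d_x$ splits a circle rather than merging two. Hence for a summand of $d_x(Y)$ to have its underlying Kauffman state in $B_0$ --- which in particular forces the red scar at $x$ itself to be a bichord in $s_{x\to B}D$ --- differentiation at $x$ must be a splitting. Next I would count signs: any splitting strictly increases the number of $-$ circles by exactly one (the three possibilities being $+\mapsto(+,-)$, $+\mapsto(-,+)$ and $-\mapsto(-,-)$). Thus, in order for the resulting enhanced state to have exactly one circle labelled $-$ as required to lie in $B$, the state $Y$ must have \emph{all} circles labelled $+$, and $x$ must split one of them.

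In that situation $d_x(Y)=(-1)^k s_{x\to B}^{+-}+(-1)^k s_{x\to B}^{-+}$, and both summands share the underlying Kauffman state~$s_{x\to B}$; either both lie in $B$ (contributing $2(-1)^k$ to $\varepsilon(\pi_{\cal M}(d_x(Y)))$) or neither does (contributing~$0$). In every case the contribution from each crossing is even, and summing over $x$ yields that $\varepsilon(\pi_{\cal M}(d(Y)))$ is even. The main conceptual point, rather than a serious obstacle, is that the bichord hypothesis on $B_0$ is exactly what rules out mergings: a merging at $x$ would leave the red chord at $x$ as a monochord in $s_{x\to B}D$, forcing $s_{x\to B}\notin B_0$; these are the only cases that could be ``unpaired'' and produce an odd contribution. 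Once mergings are eliminated, the two splitting summands automatically pair up with the same sign, producing the factor of~$2$ and finishing the proof.
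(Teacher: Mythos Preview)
Your proof is correct and follows essentially the same approach as the paper's: reduce to a single enhanced state $Y$, decompose $d(Y)$ crossing by crossing, use the bichord hypothesis on $B_0$ to force the relevant crossing to be a splitting, and then pair the two resulting summands $s_{x\to B}^{+-}$ and $s_{x\to B}^{-+}$ (which share the same underlying Kauffman state and each carry exactly one $-$). The only cosmetic difference is that you explicitly deduce that $Y$ must have all circles labelled $+$, whereas the paper argues directly that the companion summand $S'$ lies in $B$ whenever $S$ does; the content is the same.
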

\begin{proof}
	It is enough to show that $\epsilon \pi_{\mathcal M}(dY)$ is even for any enhanced state $Y$, so let $Y$ be an enhanced state in $C^{i-1,j}(D)$ such that $\pi_{\mathcal M}d(Y)\not = 0$. Then there exists a blue scar $b$ in $Y$ such that $\pi_{\mathcal M}d_b(Y)\not= 0$. Then a summand $S$ of $d_b(Y)$ is in $B$. Since all the red scars in the states of $B$ are bichords, $b$ must be a monochord, and since there is only one circle labelled $-$ in the enhanced states in $B$, the  circle $C$ supporting the monochord $b$ must be labelled $+$. Recall that, when differentiating with respect to $b$, the circle $C$ splits in two circles $C_1$ and $C_2$, and $d_bY=S+S'$ where $S$ and $S'$ are exactly equal except that the labels of the circles $C_1, C_2$ are $+,-$ in $S$ and $-,+$ in $S'$. In particular, $S'$ is also in $B$. Hence, we have proved that $\pi_{\mathcal M}d_b(Y)\not=0$ implies that  $\epsilon\pi_{\mathcal M}d_b(Y) = \pm 2$. It follows that $\epsilon\pi_{\mathcal M}d (Y)$ is even.
\end{proof}

\section{Patterns for torsion}  
\label{SectionMain}
\begin{theorem} \label{TheoremGreat}
	Let $D$ be a link diagram of an oriented link $L$. Let $s_0$ be an initial Kauffman state such that all the blue scars in $s_0D$ are grouped into $k$ ladders $H_i$ of heights $h_i\geq 2$, for $i=1,\dots, k$. Suppose that:
	\begin{enumerate}
		\item All the ladders have periphery number one.
		\item There exists a ladder $H_{l}$ with height $h_{l}\geq 3$.
		\item All the red scars of $s_0D$ become bichords when broken all the ladders; precisely, all the red scars of $s_0D$ become bichords in $s_1D$ where $s_1$ is obtained from $s_0$ by changing into red one blue step of each ladder $H_i$, $i=1, \dots, k$. 
	\end{enumerate}
	Then there is an order two torsion element in the Khovanov homology of $L$. Precisely, if $\mu_1, \dots, \mu_k$ are integers with $2\leq \mu_i\leq h_i$, $i=1, \dots, k$, and there exists $r\in \{1, \dots, k\}$ such that $\mu_r$ is even and $\mu_r<h_r$, then the class $[V]$ of the  chain  $V=V(\mu_1,\dots,\mu_k)$ defined in Proposition~\ref{PropositiondX=2V}, that is, 
	$$
	V = \sum_{\scriptsize{ \begin{matrix} i\in \{1, \dots, k\,/\, \\ \mu_{i} \textnormal{ is even and } \mu_i<h_i\} \end{matrix}}}
		(-1)^{\mu_1+\dots +\mu_{i-1}} s(\dots, \mu_{i-1}, 1+\mu_i, \mu_{i+1}, \dots ;C_i^{0-}),
		$$
		is an order two torsion element in $\underline{Kh}^{i,j}(D)$, where $i = i_0 + 1 + \sum_{i=1}^k \mu_i$ and $j = i_0 + |s_1D| + 2\sum_{i=1}^k\mu_i - k$, being $i_0$ the number of red scars in $s_0D$.
\end{theorem}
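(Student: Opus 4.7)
The plan is to assemble three ingredients: that $V$ is a cycle, that $2V$ bounds, and that $V$ itself does not bound. Proposition~\ref{PropositiondX=2V} directly supplies $dX=2V$ for $X=s(\mu_1,\dots,\mu_k;+)$, which is the middle point. Applying $d$ once more and using $d^2=0$ gives $2\,dV=0$; working over $\mathbb{Z}$ with free chain modules $C^{i,j}(D)$ then forces $dV=0$. Hence $V$ is a cycle with $2[V]=0$ in $\underline{Kh}^{i,j}(D)$, and the whole argument collapses to showing $[V]\neq 0$.

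To detect non-exactness I will use the even-module machinery of Section~\ref{SectionEvenModules} applied to a carefully chosen single Kauffman state. Fix the index $r$ provided by the hypothesis (its existence relies on condition~(2), which guarantees at least one ladder of height $\geq 3$), and fix any subsets $M_i^{\circ}\subset\{1,\dots,h_i\}$ with $|M_r^{\circ}|=1+\mu_r$ and $|M_i^{\circ}|=\mu_i$ for $i\neq r$. Set $s^{\circ}=s(M_1^{\circ},\dots,M_k^{\circ})$ and $B_0=\{s^{\circ}\}$, and let $\mathcal{M}$ be the module generated by the enhanced states with underlying Kauffman state $s^{\circ}$ and exactly one circle labelled $-$.

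The central verification is that every red scar of $s^{\circ}D$ is a bichord, after which Proposition~\ref{PropositionConstructingEvenModules} certifies that $\mathcal{M}$ is even. The red scars split into two types. For ladder steps turned red in $s^{\circ}$, the hypothesis $\mu_i\geq 2$ together with periphery number one makes the circles $C_i^{0},\dots,C_i^{\mu_i-1}$ pairwise distinct, so each such step connects two distinct circles and is a bichord. For the original red scars of $s_0D$, hypothesis~(3) ensures they are bichords in $s_1D$; the remaining work, and the main obstacle of the proof, is to argue that breaking the additional blue steps needed to pass from $s_1D$ to $s^{\circ}D$ cannot merge the two circles on either side of any such scar. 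The periphery-one assumption is what makes this work: inside a periphery-one ladder, further step-breakings only produce extra intermediate circles confined to the ladder, leaving unchanged the global connectivity around any original red scar.

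Once $\mathcal M$ is shown to be even, matching cardinalities shows that any summand $s(\tilde M_1,\dots,\tilde M_k;C_i^{0-})$ of $V$ whose underlying Kauffman state coincides with $s^{\circ}$ must have $|\tilde M_i|=1+\mu_i$, which forces $i=r$ and $\tilde M_j=M_j^{\circ}$ for every $j$. Hence exactly one summand of $V$ lies in $B$, so $\varepsilon\pi_{\mathcal M}(V)=\pm 1$, and Proposition~\ref{PropositionEvenExactness} yields that $V$ is not exact. This produces the claimed order-two torsion class $[V]$. Finally, the degrees $i$ and $j$ in the statement are read off directly from any summand $S^{\circ}$ of $V$: its homological degree adds the $i_0$ original red scars to the $1+\sum_i\mu_i$ ladder steps made red, while its quantum degree is $i(S^{\circ})+\theta(S^{\circ})$, with $\theta(S^{\circ})$ computed by relating $|s^{\circ}D|$ to $|s_1D|$ via the circle changes induced by breaking $\mu_i-1$ (or $\mu_r$) additional steps in each ladder.
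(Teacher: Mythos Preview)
Your proof is correct and follows essentially the same route as the paper's own proof: invoke Proposition~\ref{PropositiondX=2V} to get $dX=2V$ (hence $dV=0$ and $2[V]=0$), then pick a single Kauffman state $s^{\circ}$ underlying one summand of $V$, verify that all its red scars are bichords, apply Proposition~\ref{PropositionConstructingEvenModules} to obtain an even module, and conclude non-exactness via Proposition~\ref{PropositionEvenExactness}. The paper chooses the specific consecutive subsets $M_i=\{1,\dots,\mu_i\}$ (and $\{1,\dots,\mu_r+1\}$) while you allow arbitrary $M_i^{\circ}$, but this is immaterial.

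One small point of attribution: when you argue that the original red scars of $s_0D$, which are bichords in $s_1D$ by hypothesis~(3), remain bichords in $s^{\circ}D$, you credit this to the periphery-number-one assumption. In fact the relevant mechanism is simply that once each ladder carries at least one red step (as in $s_1D$), every remaining blue step in that ladder is a monochord (this is exactly the observation at the start of the proof of Proposition~\ref{PropositiondX=2V}). Hence passing from $s_1D$ to $s^{\circ}D$ only \emph{splits} circles, and a bichord stays a bichord under splits. The periphery-number-one hypothesis is used elsewhere (in Proposition~\ref{PropositiondX=2V}) but is not what drives this particular step. The paper's proof is actually terser here, saying only ``by item~3'', so your more explicit discussion is welcome; just adjust the reason given.
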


\begin{proof}
	Let $\mu_1, \dots, \mu_k$ be integers with $2\leq \mu_i\leq h_i$ for all $i=1, \dots, k$, and assume that there exists $r\in \{1, \dots, k\}$ such that $\mu_r$ is even and $\mu_r<h_r$ (note that such a set of integers exist by item 2 and since $h_i\geq 2$). Let $X=s(\mu_1, \dots, \mu_k;+)$. Then, since all the ladders have periphery number one (by item 1) and each $\mu_i\geq 1$, we have that $dX = 2V$ and $V\not=0$ by Proposition~\ref{PropositiondX=2V}. Note that, in particular, $V$ is a cycle.
	
	We now prove that $V$ is not exact. For it, we choose the following enhanced state
	\begin{equation}\label{eq:StateGeneratingSM}
		s=s(\dots, \{1, \dots, \mu_{r-1}\}, \{1, \dots, \mu_r, \mu_r+1\}, \{1, \dots, \mu_{r+1}\}, \dots;C_r^0-),
	\end{equation}
	which is a summand of $s(\dots, \mu_{r-1}, 1+\mu_r, \mu_{r+1}, \dots ;C_r^{0-})$, hence of $V$ since $\mu_r$ is even and $\mu_r<h_r$. Let $B_0=\{S\}$, where $S$ is the underlined Kauffman state of $s$. The red scars of $sD$ are all bichords: if a red scar comes from a blue step of a ladder, it is a bichord since we are assuming  $\mu_i\geq 2$, which is possible since $h_i\geq 2$; if a red scar does not come from a blue step of a ladder, then it is a bichord by item 3. 
	
	Consider now the set $B$ of all enhanced states of $S$ having a unique circle $-$. Then, by Proposition~\ref{PropositionConstructingEvenModules} the module ${\cal M}$ generated by $B$ is an even module. It follows, by Proposition~\ref{PropositionEvenExactness}, that the chain $V$ is not exact.
	
	Finally, note that $i=i_0 + 1 + \sum_{i=1}^k \mu_i$ where $i_0$ is the number of red scars in $s_0D$, the number of circles in each enhanced state of $V$ is equal to $|s_1D|+\sum_{i=1}^k(\mu_i - 1) +1$, hence $\tau = |s_1D|+\sum_{i=1}^k(\mu_i - 1) -1$, and then $j = i +\tau = i_0 + |s_1D| + 2\sum_{i=1}^k\mu_i - k$.
\end{proof}
	
\begin{remark}
	Hypothesis $h_i\geq 2$ in Theorem~\ref{TheoremGreat} is necessary. For example, for $D=P(-1,3)$ and the initial state $s_0=s_A$, we have that $s_0D$ has two ladders, with $h_1=1$ and $h_2=3$, conditions 1 to 3 are satisfied, but the corresponding link, which is the Hopf link, has no torsion at all. See Figure~\ref{FigureHopf}.
\end{remark}
	
\begin{figure}[ht!]
	\labellist
	\pinlabel{$D=P(-1,3)$} at -105 0
	\pinlabel{$s_0D=s_AD$} at 340 0
	\endlabellist
	\begin{center}
		\includegraphics[scale=0.3]{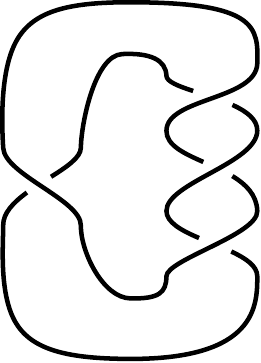}
		\hspace{3cm}
		\includegraphics[scale=0.3]{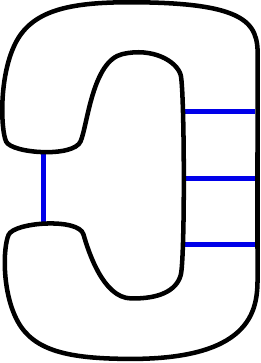}
	\end{center}
	\caption{Diagram $D=P(-1,3)$ of the Hopf link and $s_0D=s_AD$.}
	\label{FigureHopf}
\end{figure}

\begin{corollary}\label{CorollaryOnlyOneLadder}
	Let $D$ be a link diagram of an oriented link $L$. Let $s_0$ be an initial Kauffman state such that all the blue scars in $s_0D$ are grouped into $k$ ladders $H_i$ of heights $h_i\geq 2$, for $i=1,\dots, k$. Suppose that:
	\begin{enumerate}
		\item There exists a ladder $H_{l}$ with periphery number one and height $h_{l}\geq 3$.
		\item All the red scars of $s_0D$ become bichords when broken all the ladders.
	\end{enumerate}
	Then there is an order two torsion element in the Khovanov homology of $L$.
	Precisely, assume that $H_1,\dots, H_t$ are the blue ladders with periphery number~1, $t\leq k$. If $\mu_1, \dots, \mu_t$ are integers with $2\leq \mu_i\leq h_i$, $i=1, \dots, t$, and there exists $r\in \{1, \dots, t\}$ such that $\mu_r$ is even and $\mu_r<h_r$, then the class $[V]$ of the chain $V=V(\mu_1,\dots,\mu_t, h_{t+1}, \dots, h_k)$, defined as in Theorem~\ref{TheoremGreat}, is an order two torsion element.
\end{corollary}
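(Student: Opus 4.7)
The plan is to reduce the statement to the same mechanism that proves Theorem~\ref{TheoremGreat}, by \emph{saturating} the ladders with periphery number two. I set $\mu_i = h_i$ for each $i>t$, and consider
$$X = s(\mu_1, \dots, \mu_t, h_{t+1}, \dots, h_k; +).$$
In every summand of $X$, all the steps of the ladders $H_{t+1}, \dots, H_k$ already carry red labels, so no blue step from a periphery-2 ladder is available for differentiation. Consequently, the only blue scars in any summand of $X$ lie in the periphery-1 ladders $H_1, \dots, H_t$.

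The first step is to verify that $dX = 2V$ by repeating the local case analysis of Proposition~\ref{PropositiondX=2V}: for each periphery-1 ladder $H_i$ with $i\leq t$ and each state of the form $s(M_1,\dots,M_k; C_i^{j-})$ that can appear in $dX$, the two cases $j\neq 0$ and $j=0$ give exactly the same pairwise cancellations (case (i)) and the same doubled contributions with sign $(-1)^{\mu_1+\dots+\mu_{i-1}}$ whenever $\mu_i$ is even and $\mu_i<h_i$ (case (ii)). The argument uses only the periphery number of the ladder being differentiated, which is one, so the hypotheses on other ladders are irrelevant. Indices $i>t$ are automatically excluded from the sum defining $V$ because $\mu_i=h_i$, recovering precisely $V=V(\mu_1,\dots,\mu_t,h_{t+1},\dots,h_k)$.

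Next I would show that $V$ is not exact via the even-module strategy of Theorem~\ref{TheoremGreat}. Choose the analogue of the state~(\ref{eq:StateGeneratingSM}), namely a specific summand $s$ of $s(\dots, 1+\mu_r, \dots; C_r^{0-})$ that occurs in $V$, and let $B_0=\{S\}$ where $S$ is its underlying Kauffman state. The crucial check is that every red scar in $SD$ is a bichord: those inside a periphery-1 ladder $H_i$ with $i\leq t$ are bichords because $\mu_i\geq 2$ (with $\mu_r+1\geq 3$ for the distinguished ladder); those inside a periphery-2 ladder $H_i$ with $i>t$ are bichords because $\mu_i=h_i\geq 2$ combined with periphery two keeps consecutive intermediate/peripheral circles globally distinct; and the original red scars of $s_0D$ are bichords by hypothesis~2 of the corollary, precisely as in the proof of Theorem~\ref{TheoremGreat}. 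Then Proposition~\ref{PropositionConstructingEvenModules} applied to the set $B$ of enhanced states of $S$ with a unique negative circle produces an even module $\mathcal{M}$ containing exactly one summand of $V$, and Proposition~\ref{PropositionEvenExactness} forces $V$ to be non-exact. Together with $dX=2V\neq 0$, this yields an order-two torsion class $[V]$.

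The main obstacle is the first step: because Proposition~\ref{PropositiondX=2V} assumes that every ladder has periphery one, I cannot invoke it as a black box. The delicate point to articulate carefully is that this assumption is used only in case (ii) of its proof and only for the ladder being differentiated; by saturating the periphery-2 ladders via $\mu_i=h_i$, we prevent any differentiation inside them, so the periphery-1 hypothesis is required only where it genuinely holds. Once this reduction is made explicit, the rest is a mild adaptation of the arguments of Proposition~\ref{PropositiondX=2V} and Theorem~\ref{TheoremGreat}.
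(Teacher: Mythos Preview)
Your approach is correct and rests on the same core idea as the paper: saturate the periphery-two ladders by taking $\mu_i=h_i$ for $i>t$, so that no differentiation ever occurs inside them. The difference lies in packaging. Rather than re-opening the proof of Proposition~\ref{PropositiondX=2V} to argue that the periphery-one hypothesis is invoked only for the ladder currently being differentiated, the paper simply passes to a new initial state $s'_0$, obtained from $s_0$ by turning every step of every periphery-two ladder red. In $s'_0D$ the only blue ladders left are $H_1,\dots,H_t$; Remark~\ref{RemarkPeripheryNumber} ensures their periphery numbers remain one, and a short check (using that both $s_1D$ and $s'_1D$ have all ladders broken, and that $h_i\geq 2$ makes the former ladder steps into bichords) verifies hypothesis~3 of Theorem~\ref{TheoremGreat} for $s'_0$. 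Theorem~\ref{TheoremGreat} then applies to $s'_0$ as a black box, and the resulting chain $V$ is literally $V(\mu_1,\dots,\mu_t,h_{t+1},\dots,h_k)$.

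So your self-identified ``main obstacle'' is exactly what the paper's change of initial state dissolves: once you work with $s'_0$, both Proposition~\ref{PropositiondX=2V} and Theorem~\ref{TheoremGreat} apply verbatim, with no need to revisit their proofs. Your route and the paper's are equivalent in substance (your $X$ and your distinguished state $s$ coincide with those produced by Theorem~\ref{TheoremGreat} applied to $s'_0$), but the paper's is more modular. A minor remark: in your bichord check for the saturated periphery-two ladders, it is really $h_i\geq 2$ that does the work (it forces an intermediate circle between any red scar and its neighbour), not the periphery number.
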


\begin{proof}
	Starting with $s_0$ we consider another state $s'_0$ by changing to red all the steps of all the blue ladders of $s_0D$ with periphery number equal to 2.  We will check that $s'_0D$ satisfies the hypothesis of Theorem \ref{TheoremGreat}. The ladders of $s'_0D$ are the ladders of $s_0D$ with periphery number 1, so there is a ladder in $s'_0D$ with height at least 3. Moreover, the periphery number of these ladders remains to be one by Remark~\ref{RemarkPeripheryNumber}. 
	
	We finally check hypothesis 3 in Theorem~\ref{TheoremGreat}. Let $s'_1$ (resp. $s_1$) be a state obtained from $s'_0$ (resp. $s_0$) by changing into red one blue step of each ladder of $s'_0D$ (resp. $s_0D$). See Figure~\ref{FiguraCorolarioDetalles}. Let $r$ be a red scar in $s'_0D$. If $r$ comes from a blue scar in $s_0D$, then it becomes a bichord in $s'_1D$ since each ladder in $s_0D$ has height greater or equal than two. Otherwise $r$ becomes a  bichord in $s'_1D$ since it is a bichord in $s_1D$ by item {\it 2}, and both $s_1D$ and $s'_1D$ have all the ladders broken. See Figure~\ref{FiguraCorolarioDetalles}.
	
\begin{figure}[ht!]
	\begin{center}
		\labellist
		\pinlabel {$H_i, i=1,\dots, t$} at 150 164
		\pinlabel {$H_j, j=t+1, \dots, k$} at 163 140
		\pinlabel {$s_0D$} at -20 115
		\pinlabel {$s'_0D$} at 290 115
		\pinlabel {$s_1D$} at -20 10
		\pinlabel {$s_1'D$} at 290 10
		\endlabellist
		\includegraphics[scale=0.7]{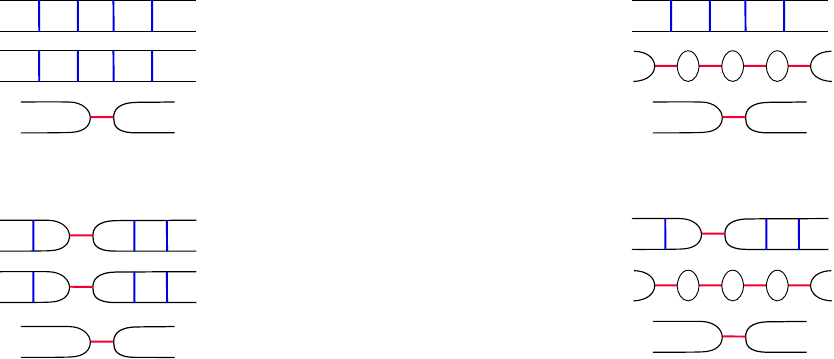}
	\end{center}
	\caption{Typical ladders with periphery numbers one and two and a red scar in $s_0D$, and what they become in $s_1D$, $s'_0D$ and $s'_1D$.}
	\label{FiguraCorolarioDetalles}
\end{figure}	
		
	The last claim is direct from Theorem~\ref{TheoremGreat} applied to the state $s'_0$.
\end{proof}

\begin{example} \label{ExampleSeisUnoMirror}
	In \cite{RPF} we found specific chains representing the torsion elements of the knot $\overline{6_1}$, the mirror image of $6_1$ (see Figure~\ref{FigureKnotMirrorSixOne}, left), for homological degrees $i=2$ and $4$ (see Table~\ref{TableKhovanovHomologyKnotMirrorSixOne}). Now, from Corollary~\ref{CorollaryOnlyOneLadder}, we discover specific chains representing the remaining torsion elements, with degrees $i=5$ and $i=7$.
\begin{figure}[ht!]
	\begin{center}
		\includegraphics[scale=0.35]{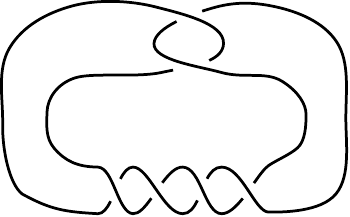}
		\qquad  
		\includegraphics[scale=0.35]{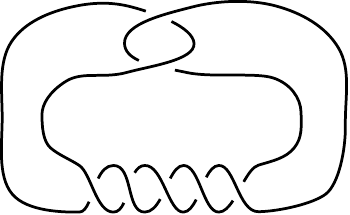}
		\qquad  
		\includegraphics[scale=0.35]{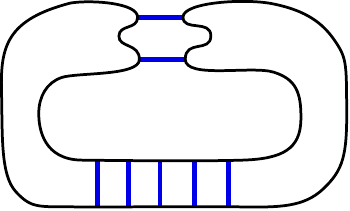}
	\end{center}
	\caption{Knot $\overline{6_1}$, a convenient diagram $D$ for it and the corresponding $s_AD$.}
	\label{FigureKnotMirrorSixOne}
\end{figure}

	\begin{table}[ht!]
	\begin{center}
		\setlength\extrarowheight{2pt}
		\begin{tabular}{|c||c|c|c|c|c|c|c|}
			\hline
			\backslashbox{\!{\color{blue}$j$} \tiny{$q$}\!}{\!{\color{blue}$i$} \tiny{$h$}\!} & {\color{blue}1} \tiny{$-4$} & {\color{blue}2} \tiny{$-3$} & {\color{blue}3} \tiny{$-2$} & {\color{blue}4} \tiny{$-1$} & {\color{blue}5} \tiny{$0$} & {\color{blue}6} \tiny{$1$} & {\color{blue}7} \tiny{$2$} \\
			\hline
			\hline
			{\color{blue}13} \tiny{$5$}&&&&&&&$\Rone$\\
			\hline
			{\color{blue}11} \tiny{$3$} &&&&&&&$\Tone{2}$\\
			\hline
			{\color{blue}9} \tiny{$1$} &&&&&$\Rmor{2}$&$\Rone$&\\
			\hline
			\, {\color{blue}7} \tiny{$-1$}&&&&$\Rone$&$\Rone\oplus\Tone{2}$&&\\
			\hline
			\, {\color{blue}5} \tiny{$-3$}&&&&$\Rone\oplus{\Tone{2}}$ &&&\\
			\hline
			\, {\color{blue}3} \tiny{$-5$}&&$\Rone$&$\Rone$&&&&\\
			\hline
			\, {\color{blue}1} \tiny{$-7$}&&$\Tone{2}$&&&&&\\
			\hline
			\,{\color{blue}-1} \tiny{$-9$}&$\Rone$&&&&&&\\
			\hline
		\end{tabular}
	\end{center}
	\caption{Khovanov homology of the knot $K=\overline{6_1}$.}
	\label{TableKhovanovHomologyKnotMirrorSixOne}
\end{table}
\end{example}

\begin{example} \label{ExampleSieteCuatro}
For a convenient diagram of the knot $7_4$ (notation in \cite{KnotAtlas}) consider the initial state~$s_0$ that assigns two red scars, as shown in Figure~\ref{FigureKnotSieteCuatro}. Theorem~\ref{TheoremGreat} detects then torsion for homological degrees $i=7$ and $9$ (degrees $h=5$ and~$7$ for the knot).
\begin{figure}[ht!]
	\begin{center}
		\includegraphics[scale=0.27]{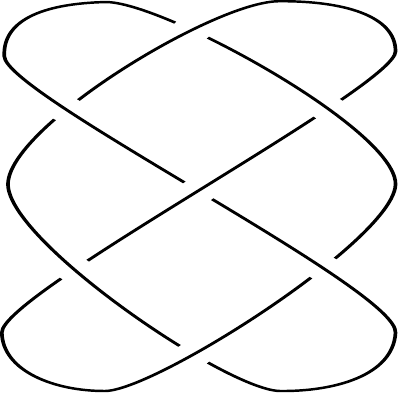}
		\qquad  
		\includegraphics[scale=0.35]{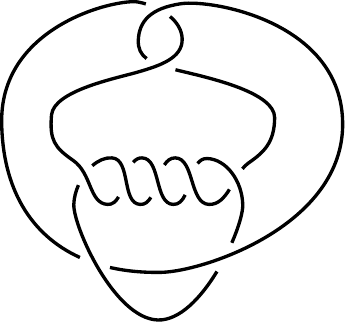}
		\qquad  
		\includegraphics[scale=0.35]{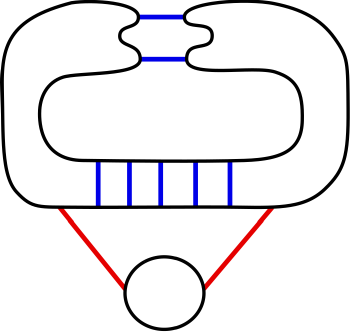}
	\end{center}
	\caption{Knot $7_4$, a convenient diagram $D$ for it and a smoothing $s_0D$.}
	\label{FigureKnotSieteCuatro}
\end{figure}
\end{example}

\begin{remark} \textnormal{Adding circles and red scars in a ``harmless" way, provides new links for which Theorem~\ref{TheoremGreat} guarantees the same subgroups $\mathbb{Z}_2$ of torsion, but for shifted homological and quantum degrees. More precisely, suppose that a diagram link $D$ and an initial Kauffman state $s_0$ are under the hypothesis of Theorem~\ref{TheoremGreat} (or Corollary~\ref{CorollaryOnlyOneLadder}). Now add to $s_0D$ a set of new circles and some red scars; assume that each new red scar has its two endpoints in different circles, and at least one of them is in one of the new circles. Assume, in addition, that no endpoint of the new red scars is between two steps of the same ladder of $s_0D$. The new collection of circles and scars is $s'_0D'$ for a certain diagram $D'$ of a link $L'$. Then, the torsion that Theorem~\ref{TheoremGreat} guarantees for the link $D$ for degrees $i$ and $j$, is also guaranteed for the diagram $D'$ for degrees $i'=i+\alpha$ and $j' = j+\beta$, where $\alpha$ is the number of red scars added, and $\beta$ is $\alpha$ plus the number of circles added. For instance, $s_0D$ in Example~\ref{ExampleSieteCuatro} is obtained from $s_AD$ in Example~\ref{ExampleSeisUnoMirror} by adding in a harmless way one circle and two red scars.} 
\end{remark}	

\section{Even modules can distinguish torsion elements} \label{SectionDistinguishing}

In the previous section we proved that, under the hypothesis of Theorem~\ref{TheoremGreat}, certain elements $V(\mu_1, \dots , \mu_r)$ define torsion elements in the Khovanov homology. Many of these elements are in the same homology module $\underline{Kh}^{i,j}(D)$, so a natural question arises: are the corresponding torsion elements the same element? In Theorem~\ref{TheoremDistinguir} we will use again even modules, now in order to prove that the torsion elements obtained are different, except in some particular cases.  

\begin{theorem} \label{TheoremDistinguir}
	Assume that we are under the hypothesis of Theorem~\ref{TheoremGreat}. Let $(\mu_1, \dots, \mu_k)$ and $(\mu'_1,\dots, \mu'_k)$ be two different uplas satisfying the hypothesis of Theorem~\ref{TheoremGreat}, so that the corresponding chains $V=V(\mu_1,\dots, \mu_k)$ and $V'=V(\mu'_1,\dots, \mu'_k)$ define torsion elements. Then $[V(\mu_1,\dots,\mu_k)] = [V(\mu'_1,\dots,\mu'_k)]$ if and only if the following conditions hold:
	\begin{itemize}
		\item[(i)] $\sum_{t=1}^k\mu_t=\sum_{t=1}^k\mu_t'$
	\end{itemize}
	and there exist $t_1,t_2 \in \{1, \dots, k\}$ such that
	\begin{itemize}
		\item[(ii)] $\mu_{t_1}\not=\mu_{t_1}'$, $\mu_{t_2}\not=\mu_{t_2}'$ and $\mu_t=\mu'_t$ for all $t\not=t_1,t_2$,
		\item[(iii)] for all $t\not= t_1,t_2$, $\mu_t$ is either odd or equal to $h_t$, and
		\item[(iv)] $\mu_{t_1}$ is even, $\mu_{t_1}<h_{t_1}$, $\mu_{t_2}$ is odd and $(\mu'_{t_1},\mu'_{t_2})=(\mu_{t_1}+1,\mu_{t_2}-1)$.
	\end{itemize}
\end{theorem}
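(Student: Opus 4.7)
Proof plan.

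Condition (i) is immediately necessary from the bidegree formulas in Theorem~\ref{TheoremGreat}: the chain $V(\mu_1,\dots,\mu_k)$ sits in homological degree $i_0+1+\sum_t\mu_t$, so $V$ and $V'$ can represent the same homology class only when their coordinate sums coincide. We assume (i) throughout.

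For the ``only if'' direction, suppose (i) holds but at least one of (ii)--(iv) fails. Every summand of $V$ or $V'$ is an enhanced state $s(M_1,\dots,M_k;C_i^{0-})$; hypothesis~3 of Theorem~\ref{TheoremGreat} together with $\mu_t\geq 2$ makes all red scars of its underlying Kauffman state $S$ into bichords, so Proposition~\ref{PropositionConstructingEvenModules} provides an even submodule $\mathcal{M}(S)$ generated by the enhanced states of $S$ with exactly one circle labelled~$-$. We will exhibit a state $S^*$ such that $\varepsilon\pi_{\mathcal{M}(S^*)}(V-V')$ is odd; Proposition~\ref{PropositionEvenExactness} then shows $V-V'$ is not exact. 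The proof is a short case analysis: (a) if $|\{t:\mu_t\neq\mu_t'\}|\geq 3$, then for any summand of $V$ with underlying Kauffman state $S^*$, no summand of $V'$ shares the same underlying state, since matching the cardinalities $|M_j|$ forces at most two disagreeing indices; (b) if the disagreement is concentrated on two indices $t_1,t_2$ but some third index $t$ has $\mu_t$ even and $\mu_t<h_t$ (so (iii) fails), the $t$-term of $V$ has Kauffman states with $|M_{t_1}|=\mu_{t_1}$, conflicting with the $|M_{t_1}|=\mu_{t_1}'$ required by every summand of $V'$; (c) the analogous analysis when (iv) fails (parities or differences wrong) yields similarly disjoint Kauffman states. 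In each case, picking $S^*$ in the disjoint family makes $V$ (or $V'$) contribute one summand to $\mathcal{M}(S^*)$ while the other contributes none, so the parity is odd.

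For the ``if'' direction, we construct $Y$ with $dY=V-V'$ explicitly. Under (i)--(iv) the sums defining $V$ and $V'$ each reduce to a single index---$t_1$ for $V$ (the only index with $\mu_t$ even and $<h_t$) and $t_2$ for $V'$---and the two resulting sums range over the \emph{same} family of underlying Kauffman states, each having $1+\mu_{t_1}$ red steps in $H_{t_1}$, $\mu_{t_2}$ in $H_{t_2}$, and $\mu_t$ in every other ladder; they differ only in whether the periphery $C_{t_1}^0$ or the periphery $C_{t_2}^0$ carries the sign~$-$. We take $Y$ to be a signed sum of chains $(S',+)$, where $S'$ is obtained from a Kauffman state $S$ in this common family by reverting one chosen red step of $H_{t_1}$ (or alternatively of $H_{t_2}$) back to blue. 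Differentiation at the unique remaining blue ladder step of each such $S'$ splits a periphery circle into a periphery and an intermediate one (yielding two enhanced states by the splitting rule of Section~\ref{SectionKhovanov}); a sign-tracked telescope across the choice of which step to revert (and, when needed, across the two ladders, using the periphery-number-one identification $C_t^{\mu_t}=C_t^0$) cancels every intermediate-circle contribution and leaves exactly $V-V'$.

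The main obstacle is the ``if'' direction: the telescope requires careful sign bookkeeping to guarantee that only the two periphery terms survive and that the contributions of the unchanged ladders introduce no extraneous summands. The case analysis for necessity is more routine but demands precision in selecting the witness Kauffman state $S^*$ for each failure mode of (ii)--(iv).
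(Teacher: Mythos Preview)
Your ``only if'' direction is essentially the paper's argument: both use the even module of Proposition~\ref{PropositionConstructingEvenModules} built from a single underlying Kauffman state, and both run the same case analysis on which of (ii)--(iv) fails in order to pick a witness state $S^*$ whose profile of red-step cardinalities occurs in $V$ but not in $V'$, so that $\varepsilon\pi_{\mathcal M(S^*)}(V-V')$ is odd.

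For the ``if'' direction you take a different, and substantially harder, route than the paper. You correctly note that under (i)--(iv) each of $V$ and $V'$ collapses to a single index in the defining sum, and that these two sums range over the \emph{same} family of underlying Kauffman states. The paper then simply observes that the enhancements agree as well, so $V=\pm V'$ as chains; since both classes are $2$-torsion by Theorem~\ref{TheoremGreat}, $[V]=[-V']=[V']$ with nothing further to construct. You instead attempt to produce an explicit $Y$ with $dY=V-V'$ via a telescope of all-$+$ states obtained by reverting one red step.

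That construction is both unnecessary and, as sketched, does not work. The phrase ``the unique remaining blue ladder step of each such $S'$'' is wrong: any such $S'$ still has $h_t-\mu_t$ blue steps in every ladder $H_t$, so $dY$ contains contributions from blue steps in ladders $H_t$ with $t\neq t_1,t_2$, producing summands whose red-step profile lies outside the common family; you offer no mechanism to cancel these. And if you symmetrise over all choices of which step to revert, you recover chains of the form $s(\nu_1,\dots,\nu_k;+)$, whose differentials by Proposition~\ref{PropositiondX=2V} are always \emph{twice} something---so coefficient parity alone obstructs reaching $V-V'$, whose coefficients are $\pm 1$. The shortcut you are missing is the paper's: once the Kauffman states match, so do the enhanced states (up to sign), and no boundary needs to be built.
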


\begin{proof}
	Let us first prove the sufficient condition. Items (iii) and (iv) imply that $\mu_{t_1}$ is the only even component of the $k$-upla $(\mu_t)_t$ satisfying $\mu_{t_1}<h_{t_1}$. Thus, according to the definition of $V(\mu_1, \dots, \mu_k)$, we have that
	$$
	V(\mu_1,\dots, \mu_k)=(-1)^{\sigma}s(\dots, \mu_{t_1-1},1+\mu_{t_1}, \mu_{t_1+1},\dots ; C_{t_1}^{0-})
	$$
	for some integer $\sigma$. Now, (ii), (iii) and (iv) imply that $\mu_{t_2}'$ is the only even component of the $k$-upla $(\mu_t')_t$ such that 
	$\mu_{t_2}'<h_{t_2}$. Thus 
	$$
	V(\mu_1',\dots, \mu_k') = (-1)^{\sigma'}s(\dots, \mu_{t_2-1}',1+\mu_{t_2}', \mu_{t_2+1}',\dots ; C_{t_2}^{0-}),
	$$
	for some integer $\sigma'$.  Conditions (ii) and (iv) imply that $V(\mu_1,\dots, \mu_k)$ and $V(\mu_1',\dots, \mu_k')$ are equal, except perhaps for the sign, hence $[V]=\pm[V']$. Since, by Theorem~\ref{TheoremGreat}, both elements have order two, they are the same element. 
	
	We now prove the necessary condition. 
		
	(i) Recall that if $V=V(\mu_1, \dots, \mu_k)$, then $[V] \in \underline{Kh}^{i,j}(D)$ where $i=i_0 + 1 + \sum_{i=1}^k \mu_i$ and $j = i_0 + |s_1D| + 2\sum_{i=1}^k\mu_i - k$, being $i_0$ the number of red scars in $s_0D$. Now observe that $i_0$ and $|s_1D|$ are independent of the upla $(\mu_1,\dots, \mu_k)$, so that $\sum\mu_t = \sum \mu'_t$ if $V$ and $V'$ are homologous.	
		
	The outline of the proof of (ii), (iii) and (iv) is the same. Assuming that  (ii) (respectively (iii), (iv)) is not satisfied, we will prove that $[V]\not= [V']$, by checking that $V-V'$ is not an exact element. To this end, we will use again even modules, in particular Propositions~\ref{PropositionEvenExactness} and \ref{PropositionConstructingEvenModules}. Note that $V$ is a chain in which the red scars of any of its enhanced states are bichords (by the third condition of Theorem~\ref{TheoremGreat} and since $\mu_i\geq 2$). Then, by Proposition~\ref{PropositionConstructingEvenModules}, the module $\mathcal M$ generated by $B$ is even where $B$ is the set of enhanced states obtained by enhancing a single state $s$ of $V$ with just one $-$ circle. Since $V$ has exactly one summand in $B$, to prove that $V-V'$ is not exact it will be sufficient, by Proposition \ref{PropositionEvenExactness}, to show that $V'$ has no summands in $\mathcal M$, that is, $\pi_{\cal M}(V')=0$. For the latter it is sufficient to show that any state $s'$ of $V'$ has, in some ladder, a different number of red scars than the state  $s$ chosen in $V$. That is, if  $\eta_t$ (resp. $\eta'_t$) denotes the number of red scars that $s$ (resp. $s'$) has in the ladder $H_t$, then it is sufficient to show that $(\eta_1,\dots, \eta_k)\not=(\eta_1',\dots, \eta_k')$ for the chosen $s$ and any state $s'$ of $V'$. We remark that, by the definition of $V$ in Theorem~\ref{TheoremGreat}, the $k$-upla $(\eta_t)_t$ is equal to $(\mu_t)_t$ except for one index $r$ for which $\eta_r=\mu_r +1$, being $\mu_r$ even and $\mu_r<h_r$. Similarly for $(\eta_t')_t$.
		
	(ii) Let $s$ and $s'$ be any states in the chains $V$ and $V'$ respectively, and let $(\eta_t)_t$,  $(\eta_t' )_t$ be the $k$-uplas defined as before, so that $(\eta_t)_t$ is equal to $(\mu_t)_t$ except for one component and similarly with $(\eta_t')_t$. Then, if there are three different indices $t_1, t_2, t_3$ such that $\mu_{t_n}\not=\mu'_{t_n}$, $n=1,2,3$, the two uplas $(\eta_1,\dots,\eta_k)$, $(\eta_1',\dots,\eta_k')$ cannot be equal, hence $\pi_{\mathcal M}(V')=0$ and $V-V'$ is not exact, a contradiction. Thus, there are at most two indices $t_1,t_2$ such that $\mu_{t_1}\not=\mu'_{t_1}$ and $\mu_{t_2}\not=\mu'_{t_2}$. Since   $(\mu_1,\dots, \mu_k)\not=(\mu'_1,\dots, \mu'_k)$, at least one component is different; and since $\sum \mu_t=\sum\mu'_t$, there must be a second different component. 	
	
	(iii) Let $t_1,t_2$ be the two indices such that $\mu_{t_i}\not=\mu'_{t_i}, i=1,2$. Suppose that there is a $t\not = t_1, t_2$ such that $\mu_t$ is even and $\mu_t<h_t$. By the definition of $V$, there is a state $s$ in $V$ with $\eta_t = \mu_t + 1$, and therefore $\eta_{t_1} = \mu_{t_1}$ and $\eta_{t_2}=\mu_{t_2}$. Then, for any state $s'$ of $V'$, since there exists $ i \in\{1, 2\}$ such that $\mu'_{t_i}=\eta'_{t_i}$, we find that $\eta'_{t_i}=\mu'_{t_i}\not=\mu_{t_i}=\eta_{t_i}$, so $V-V'$ is not exact, a contradiction.
	
	(iv) By hypothesis of Theorem~\ref{TheoremGreat}, there is some $\mu_t$ even with $\mu_t<h_t$, and condition (iii) implies that $t\in\{t_1,t_2\}$. Assume for instance that $t=t_1$ (hence $\mu_{t_1}$ is even and $\mu_{t_1}<h_{t_1}$). First, we will prove that, if $\mu'_{t_2} \not= \mu_{t_2}-1$, then $V-V'$ is not exact. There is a state $s$ of $V$ with $\eta_{t_1}=\mu_{t_1}+1$. Let $s'$ be an arbitrary state of $V'$. Let $r\in \{ 1, \dots, k\}$ such that $\eta_r'=\mu'_r+1$ (for the state $s'$), so that $\eta'_t=\mu'_t$ for all $t\not=r$. Note that, in particular, $\mu'_r$ is even and $\mu'_r<h_r$, hence $r=t_1$ or $r=t_2$ by (iii). If $r = t_1$, then $\eta_{t_1} = \mu_{t_1}+1 \not= \mu_{t_1}'+1 = \eta_{t_1}'$ by (ii), so that $V-V'$ is not exact. And if $r=t_2$, assuming that $\mu'_{t_2} \not= \mu_{t_2}-1$, we find that $	\eta_{t_2}=\mu_{t_2}\not= \mu'_{t_2}+1=\eta_{t_2}'$, so again $V-V'$ is not exact.
	
	Then, since $\sum \mu_t = \sum \mu'_t$ and $\mu_t=\mu'_t$ if $t\not= t_1, t_2$, we deduce that $\mu_{t_1}' = \mu_{t_1}+1$. Finally, since $\mu'_{t_2}=\mu'_r$ is even and $\mu'_{t_2}=\mu_{t_2}-1$, we find that $\mu_{t_2}$ is odd.
\end{proof}

\begin{remark}\label{RemarkDistinguir}
		If we take $(\mu_1,\dots, \mu_k) $ satisfying the hypothesis of Theorem~\ref{TheoremGreat}, with all the $\mu_i$ even, then Theorem~\ref{TheoremDistinguir}(iv) guarantees that the torsion element $V(\mu_1,\dots,\mu_k)$ is different from all the other torsion elements obtained from Theorem~\ref{TheoremGreat}. We will use this remark to obtain lower bounds on the number of $\mathbb{Z}_2$ subgroups in the Khovanov homology of some families of links. 
\end{remark}

\section{Torsion in monocircular diagrams revisited} \label{SectionMonocircularRevisited}

In Theorem 3.2 of \cite{RPF}, another collection of torsion elements was found for what we called monocircular diagrams of type $D(h_1,h_2)$. In this section we will compare these elements among themselves, and with the new ones introduced in the present paper. 

In \cite{RPF} we called {\it monocircular diagrams} of type $D(h_1,h_2)$ to those oriented link diagrams $D$ whose $s_AD$ has one circle, two blue ladders of heights $h_1,h_2$ and no other ladders nor red scars; hence it is just the standard diagram of the pretzel knot $D(h_1,h_2) = P(-1, \stackrel{(h_1)}{\dots}, -1, h_2)$. For convenience, we state here Theorem 3.2 of \cite{RPF} and the part of Proposition 3.1 involved in it using the notation introduced in this paper.

\begin{theorem} (Theorem 3.2 in \cite{RPF}) \label{Theorem3.2RPF}
	Let $D$ be a monocircular diagram of type $D(h_1,h_2)$ with $h_1,h_2 \geq 2$. Let $\mu$ be an odd integer with $1\leq \mu < h_i$ where $i\in \{1, 2\}$. Then the classes of the elements $V(0,\mu):=s(1,\mu; C_2^{0-})$ if $i=1$, and $V(\mu,0):=(-1)^{\mu}s(\mu,1; C_1^{0-})$ if $i=2$, define torsion elements of order two in $\underline{Kh}^{\mu+1,2\mu-1}(D)$. 
\end{theorem}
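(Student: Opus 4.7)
The plan is to follow the two-step template of Theorem~\ref{TheoremGreat}: first exhibit a chain $X$ with $dX=2V$, so $V$ is a cycle and $[2V]=0$, then use an even module to conclude $[V]\neq 0$. By the $H_1\leftrightarrow H_2$ symmetry of a monocircular diagram, it suffices to treat the case $i=1$, and I take
$$V:=V(0,\mu)=s(1,\mu;C_2^{0-}),\qquad\mu\text{ odd with }1\leq\mu<h_1.$$
The pair $(\mu_1,\mu_2)=(0,\mu)$ lies just outside the hypothesis $\mu_i>0$ of Proposition~\ref{PropositiondX=2V}, but the formula for $V$ in that proposition still makes sense and collapses (since $\mu_2=\mu$ is odd) to exactly the chain above, and also pinpoints the candidate $X:=s(0,\mu;+)$ for Step~1.

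For Step~1 I compute $dX$ crossing by crossing, as in the proof of Proposition~\ref{PropositiondX=2V}. Differentiating with respect to a blue step of $H_1$ splits the unique periphery circle $C^0=C_1^0=C_2^0$ (labelled $+$ throughout $X$); periphery number one of $H_1$ makes both splitting outputs land on the same summand of $V$, reproducing the case~(ii) analysis of that proof now with $\mu_1=0$, and the combined contribution over all blue steps of $H_1$ is $2V$. Differentiating with respect to a blue step of $H_2$ yields contributions indexed by $\mu+1$ scars in $H_2$; since $\mu$ is odd and the periphery number of $H_2$ is one, the two-occurrence tallies in case~(ii) come with signs $(-1)^0$ and $(-1)^{\mu}$ and so cancel pairwise. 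Hence $dX=2V$, so $V$ is a cycle.

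For Step~2 I pick the summand $s:=s(\{1\},\{1,\ldots,\mu\};C_2^{0-})$ of $V$ and mimic the proof of Theorem~\ref{TheoremGreat}: build an even submodule $\mathcal M$ of $C(D)$ containing exactly one summand of $V$ (namely $s$), so that Proposition~\ref{PropositionEvenExactness} yields $[V]\neq 0$. In the clean case where every red scar of the underlying Kauffman state $S$ is a bichord, $\mathcal M$ is the span of all one-negative-circle enhancements of $S$ and Proposition~\ref{PropositionConstructingEvenModules} applies directly. Since $\mu_1=1$ falls short of the uniform $\mu_i\geq 2$ hypothesis of Theorem~\ref{TheoremGreat}, the monocircular circle-count can force exactly one of the $\mu+1$ red scars of $S$ to be a monochord, in which case the generating set $B$ has to be trimmed by hand — removing the single one-negative-circle enhancement whose minus-circle supports the monochordal scar — and the evenness re-verified via the splitting/merging recipe for $d$, after which Proposition~\ref{PropositionEvenExactness} still applies. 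Combined with Step~1, this shows that $[V]$ is an order-two element of $\underline{Kh}^{\mu+1,2\mu-1}(D)$; the degrees follow from $i(s)=\mu+1$ and a direct computation of $\theta(s)$.

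The main obstacle I expect is in Step~2: identifying the (possibly monochordal) problematic red scar using the specific monocircular topology, and verifying that excluding its single one-negative-circle enhancement from $B$ both preserves evenness and retains the chosen summand $s$ in $B$ (in particular that the circle supporting the monochord is not the minus-circle $C_2^0$ of $s$). Everything else is a faithful adaptation of the arguments in Proposition~\ref{PropositiondX=2V} and Theorem~\ref{TheoremGreat}.
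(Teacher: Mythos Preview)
Your Step~1 already fails at the level of degrees. The chain $X=s(0,\mu;+)$ has, for each summand, $\mu+1$ circles (since $|s_AD|=1$, breaking one step of $H_2$ gives two circles, and each further break in $H_2$ is a monochord split), so $\theta(X)=\mu+1$ and $j(X)=2\mu+1$, whereas $j(V)=2\mu-1$. Since $d$ preserves $j$, $dX$ cannot equal $2V$. The underlying geometric mistake is your claim that a blue step of $H_1$ \emph{splits} in a summand of $X$: because $|s(\emptyset,M_2)|=\mu+1$ while $|s(\{x\},M_2)|=\mu$, the blue steps of $H_1$ are \emph{bichords} there, and differentiating at them merges two $+$ circles into a single $+$ circle, producing all-$+$ states rather than summands of $V$. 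Proposition~\ref{PropositiondX=2V} really does need $\mu_i>0$ on \emph{every} ladder to force the monochord behaviour you invoke. A correct choice for $\mu\geq 3$ is $X=-\,s(1,\mu-1;+)$, which sits inside the hypotheses of Proposition~\ref{PropositiondX=2V} with $(\mu_1,\mu_2)=(1,\mu-1)$ and yields $dX=2V(0,\mu)$ directly; but this breaks down for $\mu=1$, which therefore requires a genuinely separate argument.

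Your Step~2 fix also collapses. In the state $S=s(\{1\},\{1,\ldots,\mu\};\,)$ the single red scar in $H_1$ is indeed a monochord, but the circle it sits on is the periphery circle $C_1^0=C_2^0$ (this is forced by $|s_1D|=1$). That is precisely the minus-circle of your chosen summand $s=s(\{1\},\{1,\ldots,\mu\};C_2^{0-})$, so your proposed ``trimming'' removes $s$ from $B$ and Proposition~\ref{PropositionEvenExactness} no longer applies. The paper does not reprove this theorem --- it is quoted from \cite{RPF} --- but the proof of Proposition~\ref{PropositionComparacionAnteriores} reveals the even module actually used there: it is generated by a mixture of two underlying Kauffman states, one with no red scars in $H_1$ and $\mu+1$ in $H_2$ (enhanced with \emph{two} minus circles), the other with one red scar in $H_1$ and a specific $\mu$-subset in $H_2$ (enhanced with one minus circle). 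This is substantially more delicate than a one-state construction via Proposition~\ref{PropositionConstructingEvenModules}, and it is exactly the extra work that the boundary case $\mu_1=1$ forces.
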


Note in particular that $[V(0,\mu)]$ and $[V(\mu, 0)]$ are in the same homology group, and there are no other coincidences. In the following proposition we will show that these two elements are different with one single exception, the case in which $\mu=1$. Subsequently, we will prove that they are always different from the torsion elements obtained in the present paper, in Theorem~\ref{TheoremGreat}. Again, even modules will play a key role.

\begin{proposition}\label{PropositionComparacionAnteriores}
	The elements $V(\mu, 0)$ and $V(0,\mu')$ define the same torsion element if and only if $\mu=\mu'=1$.
\end{proposition}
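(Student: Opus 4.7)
The proposition is an ``if and only if'' with nontrivial content in both directions.

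\textbf{Sufficiency} ($\mu = \mu' = 1$). I would verify the chain-level equality $V(1, 0) = -V(0, 1)$ directly. Theorem~\ref{Theorem3.2RPF} places both chains in $\underline{Kh}^{2, 1}(D)$; in this bidegree, the formula $j = i + \theta$ together with the single $-$-circle assignment forces each summand $s(\{i\}, \{j\}; \cdot)D$ of $s(1, 1; \cdot)D$ to consist of a unique circle, $|sD| = 1$. This one circle plays the roles of both $C_1^0$ and $C_2^0$, so
$$
s(\{i\}, \{j\}; C_1^{0-}) = s(\{i\}, \{j\}; C_2^{0-}).
$$
Summing over the index set gives $s(1, 1; C_1^{0-}) = s(1, 1; C_2^{0-})$, whence $V(1, 0) = -s(1, 1; C_1^{0-}) = -V(0, 1)$. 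Since $[V(0, 1)]$ has order two, $[V(1, 0)] = -[V(0, 1)] = [V(0, 1)]$.

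\textbf{Necessity.} If $[V(\mu, 0)] = [V(0, \mu')]$, comparing the bidegrees $(\mu + 1, 2\mu - 1)$ and $(\mu' + 1, 2\mu' - 1)$ from Theorem~\ref{Theorem3.2RPF} forces $\mu = \mu'$. I must rule out $\mu = \mu' \geq 3$ (odd). Suppose for contradiction that $V(\mu, 0) - V(0, \mu)$ is exact. The decisive combinatorial fact is that for $\mu \geq 3$, summands of $V(\mu, 0)$ carry red-scar distribution $(\mu, 1)$ across $(H_1, H_2)$, while summands of $V(0, \mu)$ carry $(1, \mu)$; these underlying Kauffman states form disjoint collections.

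Pick any summand $s = s(M_1, M_2; C_1^{0-})$ of $V(\mu, 0)$ with underlying Kauffman state $S$, and let $B$ denote the set of enhancements of $S$ having exactly one $-$-circle, so that $\mathcal M = \langle B \rangle$. By disjointness, and because $s$ is the unique enhancement of $S$ with $-$ at $C_1^0$, exactly one summand of $V(\mu, 0) - V(0, \mu)$ lies in $B$. If $\mathcal M$ is even, Proposition~\ref{PropositionEvenExactness} contradicts exactness and completes the proof.

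\textbf{The main obstacle} is establishing evenness of $\mathcal M$. Proposition~\ref{PropositionConstructingEvenModules} does not apply verbatim: the $\mu$ red scars of $S$ in $H_1$ are bichords (since $\mu \geq 2$), but the single red scar of $S$ in $H_2$ is a monochord---$\mu_2 = 1$ together with $H_2$ having periphery number one force both its endpoints onto the outer circle $C_1^0 = C_2^0$ of $SD$. The standard proof of Proposition~\ref{PropositionConstructingEvenModules} produces $\pm 2$ contributions from blue-monochord splits in predecessors $Y$; the monochord case instead yields a blue-bichord merge, producing only $\pm 1$. I would resolve this through a careful case analysis on pairs $(Y, b)$ with $\pi_{\mathcal M} d_b(Y) \neq 0$, enlarging $B$ strategically with auxiliary enhanced states that lie outside the support of $V(\mu, 0) - V(0, \mu)$---for example, enhancements of Kauffman states $s(M_1, \{r'\}; \cdot)$ for $r' \neq r$ with the $-$-circle placed at an intermediate rather than at the periphery---to supply the missing second contribution that pairs with each problematic merge into $\pm 2$. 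The sign-and-parity bookkeeping across this augmentation is where the real technical work sits.
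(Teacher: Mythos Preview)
Your sufficiency argument matches the paper's: for $\mu=\mu'=1$ each summand of $s(1,1;\cdot)$ has a single circle (since $|s_1D|=1$ for monocircular diagrams), so $C_1^0=C_2^0$ and $V(1,0)=-V(0,1)$. The bidegree reduction to $\mu=\mu'$ is also fine.

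The gap is in the main step, ruling out $\mu\ge 3$. You correctly diagnose why Proposition~\ref{PropositionConstructingEvenModules} fails for the module $\mathcal M$ built from a single underlying state $S=s(M_1,\{r\};\,)$ of $V(\mu,0)$: the lone red scar in $H_2$ is a monochord, so the predecessor obtained by turning it blue yields a \emph{merge} and contributes $\pm 1$ rather than $\pm 2$. But your proposed remedy---``enlarging $B$ strategically with \dots $s(M_1,\{r'\};\cdot)$ for $r'\ne r$ with the $-$-circle placed at an intermediate''---is not carried out, and as stated is not well-formed: a state with a single red scar in $H_2$ has \emph{no} intermediate circles in $H_2$, and if you mean an intermediate of $H_1$, it is unclear how adding such states pairs off the odd contribution coming from the $H_2$-merge. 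You acknowledge as much (``the sign-and-parity bookkeeping \dots is where the real technical work sits''), so the argument is incomplete exactly at its crux.

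The paper proceeds quite differently. Rather than building a new even module around a state of $V(\mu,0)$, it \emph{re-uses} the even module already constructed in the proof of Theorem~3.2 of \cite{RPF}, generated by
\[
B=\bigl\{\,s(\emptyset,\{1,\dots,\mu+1\};C_2^0,C_2^k)\,\bigr\}_{k}\ \cup\ \bigl\{\,s(\{1\},\{2,\dots,\mu+1\};C_2^k)\,\bigr\}_{k=0}^{\,\mu-1}.
\]
Two structural features of this $B$ are absent from your sketch: it mixes underlying states with \emph{zero} and with \emph{one} red scar in $H_1$, and the first family carries \emph{two} circles labelled~$-$. Evenness is not re-proved here; it is imported from \cite{RPF}. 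Granting that, one has $\varepsilon\pi_{\mathcal M}(V(0,\mu))=1$ (only the summand $s(\{1\},\{2,\dots,\mu+1\};C_2^{0-})$ survives) while $\pi_{\mathcal M}(V(\mu,0))=0$, because every state of $V(\mu,0)$ carries $\mu>1$ red scars in $H_1$ and so cannot match any element of $B$. Hence $\varepsilon\pi_{\mathcal M}(V(0,\mu)-V(\mu,0))=1$ is odd and the difference is not exact.

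In short: the decisive input is an even module whose evenness was the hard work of the earlier paper, and whose two-tier structure (including states with two $-$'s and with no red scar in $H_1$) is not the enlargement you propose.
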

\begin{proof}
	If $\mu=\mu'=1$, then $V(0,1)=s(1,1;C_1^{0-})=-V(1,0)$ so they define the same torsion element. Conversely, if $[V(\mu,0)]=[V(0,\mu')]$, we already know that $\mu=\mu'$. Suppose that $\mu > 1$. In the proof of Theorem 3.2 of \cite{RPF}, we considered the submodule $\mathcal M$ generated by the subset of chains
	\begin{eqnarray*}
		B&=&\{  s(\{\},\{1,\dots ,\mu+1\}; C^0_2,C^k_2) \,/\, k=1,\dots, \mu+1\}
		\\
		&& \cup \,  \{s(\{1\},\{2,\dots ,\mu+1\}; C^k_2)\,/\, k=0,1,\dots, \mu-1\},
	\end{eqnarray*}
	where we have used the notation introduced in Section~\ref{SectionLadders}; for instance, $s(\{1\},\{2,\dots ,\mu+1\};  C^k_2)$ is the enhanced state whose first step in the ladder $H_1$ and the steps $2$ to $\mu+1$ in the ladder $H_2$ are red, and the circle $C^k$ of the ladder $H_2$ is labelled $-$. We showed there that $\mathcal M$ is even, that is, $\varepsilon(\pi_{\mathcal M}(d(Y)))$ is even for any chain $Y$. In particular, if $V=V(0,\mu)$ and $V'=V(\mu,0)$ were homologous, $\varepsilon(\pi_{\mathcal M}(V-V'))$  should be even. Now, 
	$V(0,\mu)=s(1,\mu;C_2^{0-})$, so that $\pi_{\mathcal M}(V) = s(\{1\},\{2,\dots,\mu+1\};C_2^{0-})$ and $\varepsilon(\pi_{\mathcal M}(V))=1$. 
	
	On the other hand, $V(\mu,0)=(-1)^{\mu}s(\mu,1; C_1^{0-})$ so that all its states have $\mu$ red scars in the ladder $H_1$. Since $\mu>1$, we get $\pi_{\mathcal M}(V')=0$ and $\varepsilon(\pi_{\mathcal M}(V-V'))=1$. Hence $V,V'$ are not homologous.  
\end{proof}  

\begin{proposition}\label{PropositionComparacionAnterioresNuevos}
	Let $(\mu_1,\mu_2)$ be a pair of integers such that $V(\mu_1,\mu_2)$ defines a torsion element for $D(h_1,h_2)$, according to Theorem~\ref{TheoremGreat}. Let $\mu$ be an odd integer such that $\mu<h_1$, so $V(\mu, 0)$ defines a torsion element according to Theorem~\ref{Theorem3.2RPF}. Then $[V(\mu_1,\mu_2)]\not=[V(\mu,0)]$. Analogously $[V(\mu_1,\mu_2)]\not=[V(0,\mu)]$ if $\mu$ is odd and $\mu<h_2$. 
\end{proposition}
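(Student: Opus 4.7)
The plan is to apply the ``even module'' method (Propositions~\ref{PropositionEvenExactness} and~\ref{PropositionConstructingEvenModules}) to separate $V(\mu_1,\mu_2)$ from $V(\mu,0)$ and $V(0,\mu)$, exploiting the elementary but decisive observation that every summand of $V(\mu_1,\mu_2)$ carries at least two red scars in each of the ladders $H_1,H_2$ (since $\mu_1,\mu_2\geq 2$ by hypothesis of Theorem~\ref{TheoremGreat}), whereas every summand of $V(\mu,0)$ has exactly one red scar in $H_2$ and every summand of $V(0,\mu)$ has exactly one red scar in $H_1$. Thus the two families of torsion chains have disjoint supports at the level of underlying Kauffman states, and an even module built from one family will annihilate the other.

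Concretely, I would fix an index $r\in\{1,2\}$ with $\mu_r$ even and $\mu_r<h_r$ (such an $r$ exists by Theorem~\ref{TheoremGreat}) and pick a specific summand $s=s(M_1,M_2;C_r^{0-})$ of $V(\mu_1,\mu_2)$ coming from the term indexed by $r$ in the formula of Proposition~\ref{PropositiondX=2V}, so that $|M_r|=1+\mu_r$ and $|M_{3-r}|=\mu_{3-r}$. Let $S$ be its underlying Kauffman state. Since $s_0=s_A$ has no red scars in a monocircular diagram, hypothesis~3 of Theorem~\ref{TheoremGreat} is automatic, and because $\mu_1,\mu_2\geq 2$ every red scar of $SD$ is a bichord (each scar inside a ladder separates consecutive intermediate circles, or an intermediate circle from the periphery). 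By Proposition~\ref{PropositionConstructingEvenModules}, the submodule $\mathcal{M}$ generated by the set $B$ of enhancements of $S$ with a single $-$ circle is then even.

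To conclude, one checks that the (at most two) terms in the sum defining $V(\mu_1,\mu_2)$, indexed by $i=1$ and $i=2$, consist of states whose underlying Kauffman states have $(1+\mu_1,\mu_2)$ and $(\mu_1,1+\mu_2)$ red scars in $(H_1,H_2)$ respectively; in particular $s$ is the only summand of $V(\mu_1,\mu_2)$ whose underlying Kauffman state is $S$, and since $s\in B$ we obtain $\varepsilon\pi_{\mathcal{M}}(V(\mu_1,\mu_2))=\pm 1$. On the other hand, every summand of $V(\mu,0)$ has a single red scar in $H_2$, while the underlying state $S$ has $\mu_{3-r}\geq 2$ or $1+\mu_{3-r}\geq 3$ red scars in $H_2$; an analogous count applies to $V(0,\mu)$. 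Hence no summand of either $V(\mu,0)$ or $V(0,\mu)$ lies in $B$, so $\varepsilon\pi_{\mathcal{M}}(V(\mu,0))=\varepsilon\pi_{\mathcal{M}}(V(0,\mu))=0$, and $\varepsilon\pi_{\mathcal{M}}(V(\mu_1,\mu_2)-V(\mu,0))=\pm 1$ is odd, and similarly for $V(\mu_1,\mu_2)-V(0,\mu)$. Proposition~\ref{PropositionEvenExactness} then shows that neither difference is exact, proving the claim.

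The only slightly delicate point — the ``main obstacle'', if one can call it that — is the bookkeeping needed to verify that exactly one summand of $V(\mu_1,\mu_2)$ lies in $\mathcal{M}$ while none of the summands of $V(\mu,0)$ or $V(0,\mu)$ do; once that is done, everything else follows formally from the two even-module propositions.
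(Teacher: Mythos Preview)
Your proof is correct and follows essentially the same approach as the paper's: you build the even module $\mathcal{M}$ from the enhancements of a single Kauffman state $S$ underlying a summand of $V(\mu_1,\mu_2)$ (exactly the module used in the proof of Theorem~\ref{TheoremGreat}), and then observe that the summands of $V(\mu,0)$ and $V(0,\mu)$ cannot hit $S$ because they have only one red scar in one of the ladders while $S$ has at least two in each. One small indexing slip: when $r=2$ the number of red scars of $S$ in $H_2$ is $1+\mu_r=1+\mu_2$, not $1+\mu_{3-r}$; this does not affect the argument since the relevant bound $\geq 2$ still holds.
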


\begin{proof}
	Assume that $\mu_1$ is even and $\mu_1<h_1$ (the case $\mu_2$ even with $\mu_2<h_2$ is similar). Let $\mathcal M$ be the even module considered in the proof of Theorem~\ref{TheoremGreat}, that is, $\mathcal M$ is generated by a subset $B$ of different enhancements of the state $s=s(\{1,2,\dots, \mu_1+1\},\{1,2,\dots, \mu_2\};)$. Then $\varepsilon(\pi_{\mathcal M}(V(\mu_1,\mu_2)))=1$. Now, any state in $V(\mu,0)$ has one ladder with exactly one red scar while, since $\mu_1,\mu_2\geq 2$, any state in $s$ has at least two red scars in both ladders. Hence $\varepsilon(\pi_{\mathcal M}(V(\mu,0)))=0$ and $V(\mu,0)$ and $V(\mu_1,\mu_2)$ are not homologous, by the same argument as in Proposition~\ref{PropositionComparacionAnteriores}.
\end{proof}

	We can collect all the above information in a figure so that it will be easy to grasp it. Denote by $G_1$ the set of pairs $(\mu_1,\mu_2)$ satisfying the hypothesis of Theorem~\ref{Theorem3.2RPF}, so $V(\mu_1,\mu_2)$ is a torsion element according to this theorem. Analogously, let $G_2$ be the set of pairs $(\mu_1,\mu_2)$ so $V(\mu_1,\mu_2)$ is a torsion element according to Theorem~\ref{TheoremGreat}. Precisely, 
	$$
	G_1 =  \{(0,\mu) \,/\, 1\leq \mu< h_2, \mu \hbox{ odd} \} \,\cup\, \{(\mu,0) \,/\, 1\leq \mu< h_1,  \mu \hbox{ odd}\}
	$$
	and 
	$$
	G_2 = \{(\mu_1,\mu_2) \in\mathbb{Z}^2 \,/\, 2\leq \mu_i\leq h_i, \hbox{with } \mu_2 \hbox{ even, } \mu_2< h_2 \hbox{ or } \mu_1 \hbox{ even, } \mu_1< h_1 \}.
	$$
	
	Note that, if $(\mu_1,\mu_2)\in G_2$, then the homological and quantum degrees of $V(\mu_1,\mu_2)$ are $i=\mu_1+\mu_2+1$ and $j=2i-3$, according to Theorem~\ref{TheoremGreat}, once observed that $i_0=0$ and $|s_1D|=1$ for monocircular diagrams. If $(\mu,0)$ or $(0,\mu) \in G_1$, we had $i=\mu +1$ and $j=2i-3$ again.
	
	Now we represent the pairs $(\mu_1,\mu_2)$ in a grid $G(h_1,h_2)$, the pairs in $G_1$ by red points, the pairs in $G_2$ by blue points. For example, the grid $G(10,15)$ for the pretzel diagram with 25 crossings $D(10,15)=P(-1, \stackrel{(10)}{\dots}, -1, 15)$ is shown in Figure~\ref{FiguraRejilla}. Points on the same line with slope $-1$ are in the same Khovanov module (the corresponding homological degree appears in grey). We join two of these points through a green segment if and only if they provide the same torsion element, according to Propositions~\ref{PropositionComparacionAnteriores} and \ref{PropositionComparacionAnterioresNuevos}. For $D(10,15)$, the number of $\mathbb{Z}_2$ components in the different degrees from $i=1$ to $i=25$  coincides with the torsion obtained for this link from KnotInfo \cite{KnotInfo}. They are the following:
	\begin{center}
	\begin{tabular}{|c|c|c|c|c|c|c|c|c|c|c|c|c|c|c|}
		\hline 
		$i$ & 1 & 2 & 3 & 4 & 5 & 6 & 7 & 8 & 9 & 10 & 11 & 12 & 13 & 14 \\ 
		\hline 
		& 0 & 1 & 0 & 2 & 1 & 3 & 2 & 4 & 3 & 5 & 4 & 5 & 5 & 5 \\ 
		\hline 
	\end{tabular} 
	\begin{tabular}{|c|c|c|c|c|c|c|c|c|c|c|c| }
		\hline 
		$i$ & 15 & 16 & 17 & 18 & 19 & 20 & 21 & 22 & 23 & 24 & 25 \\ 
		\hline 
		& 5 & 4 & 5 & 4 & 4 & 3 & 3 & 2 & 2 & 1 & 1  \\ 
		\hline 
	\end{tabular} 
	\end{center}

	\begin{figure}[ht!]
		\labellist
		\pinlabel {\scriptsize  $1$} at 29 21
		\pinlabel {\scriptsize $10$} at 165 21
		\pinlabel {\scriptsize  $\mu_1$} at 203 21
		\pinlabel {\scriptsize  $1$} at 6 46 
		\pinlabel {\scriptsize  $15$} at 2 256
		\pinlabel {\scriptsize  $\mu_2$} at 5 285
		\pinlabel {\textcolor{gray}{\scriptsize  $i=1$}} at 36 -5  
		\pinlabel {\textcolor{gray}{\scriptsize  $11$}} at 200 -5
		\pinlabel {\textcolor{gray}{\scriptsize  $16$}} at 202 73
		\pinlabel {\textcolor{gray}{\scriptsize  $25$}} at 202 208
		\endlabellist
		\begin{center}
			\includegraphics[scale=0.6]{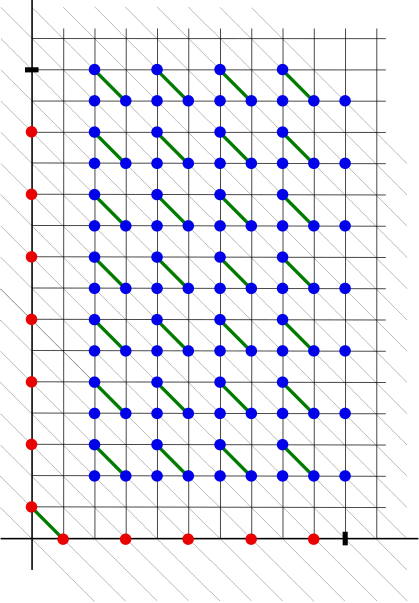}
		\end{center}
		\caption{The grid $ G(10,15)$ for the pretzel knot $D(10, 15)$.}
		\label{FiguraRejilla}
	\end{figure}
	
	For the general case, one can easily observe a pattern for the grids $G(h_1,h_2)$ of $D(h_1, h_2)=P(-1, \stackrel{(h_1)}{\dots}, -1, h_2)$. One must take care near the border lines $\mu_1=h_1$ and $\mu_2=h_2$, where the parity of $h_1,h_2$ matters. The cases with $h_1=2$ or $h_2=2$ are a bit special. For example, Figure~\ref{FiguraVariasRejillas} shows the grids $G(2,5)$, $G(2,6)$, $G(4,6)$ and $G(5,7)$, that describe the torsion (given by Theorems~\ref {TheoremGreat} and \ref{Theorem3.2RPF}) for the pretzels $D(2,5)$, $D(2,6)$, $D(4,6)$ and $D(5,7)$. We can extract some easy consequences. For instance, if $h_1, h_2>2$, the pretzel link $D(h_1,h_2)$ has torsion in all the columns of its Khovanov table except in the first and third one, and except in the last one if both $h_1$ and $h_2$ are even. Moreover, for a homological index $i$ with $2\leq i \leq \min\{h_1,h_2\}$, the number of $\mathbb Z_2$-components provided by Theorems \ref{Theorem3.2RPF} and \ref{TheoremGreat} in $\underline{Kh}^{i,2i-3}(D)$ is $\frac{i-3}{2}$ if $i$ is odd, $\frac{i}{2}$ if $i$ is even. In particular, there are pretzel links $D(h_1,h_2)$ with as many $\mathbb{Z}_2$-components as desired in the same Khovanov module.
		
	\begin{figure}[ht!]
		\labellist
		\pinlabel {\scriptsize  $2$} at 45 5  
		\pinlabel {\scriptsize  $2$} at 188 5  
		\pinlabel {\scriptsize  $4$} at 365 5  
		\pinlabel {\scriptsize  $5$} at 529 5  
		\pinlabel {\scriptsize  $5$} at 5 92  
		\pinlabel {\scriptsize  $6$} at 148 107 
		\pinlabel {\scriptsize  $6$} at 295 107  
		\pinlabel {\scriptsize  $7$} at 445 122  
		\pinlabel {\textcolor{gray}{\scriptsize  $7$}} at 124 -4 
		\pinlabel {\textcolor{gray}{\scriptsize  $7$}} at 265 -4 
		\pinlabel {\textcolor{gray}{\scriptsize  $10$}} at 416 26  
		\pinlabel {\textcolor{gray}{\scriptsize  $12$}} at  564 71
		\endlabellist
		\begin{center}		
			\includegraphics[scale=0.6]{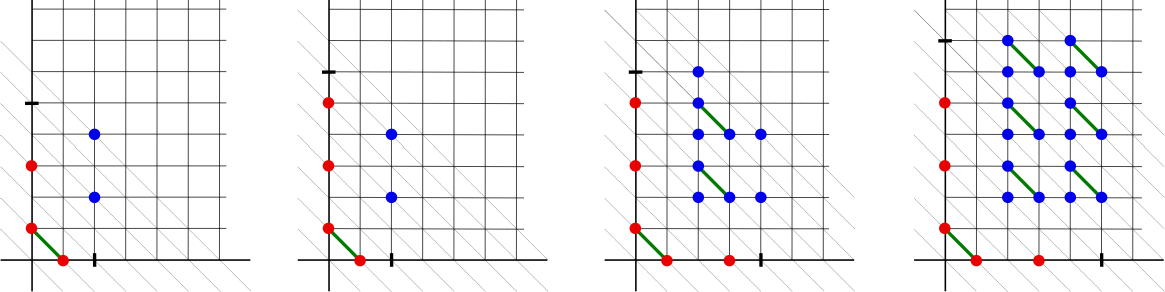}
		\end{center}
		\caption{The grids $G(2,5)$, $G(2,6)$, $G(4,6)$ and $G(5,7)$.}
		\label{FiguraVariasRejillas}
	\end{figure}	
	
	As it has been said, for $D(10,15)$ our theorems provide all the torsion. This is also true for many small (with few crossings) pretzel diagrams $D(h_1, h_2)$, and it seems an easy exercise to prove that it is true in general, by considering the long exact sequence of Khovanov homology obtained by smoothing any crossing.  

	As a summary example, consider the knot $8_2^*$, the mirror image of $8_2$. This is a pretzel knot with diagram $D=D(3,6)=P(-1,-1,-1,6)$. See Figure~\ref{FigureOchoDosEspecular}. Its smoothing $s_AD$ has two ladders. Since $D$ has $n=3$ negative crossings and $p=6$ positive crossings, we have that $Kh^{h,q}(8_2^*) = \underline{Kh}^{i,j}(D)$ where $i = h+n =h+3$ and $j = q-p+2n =q$. The Khovanov homology of $8_2^*$ is shown in Table~\ref{TableKnotEightTwoMirror}.
	\begin{figure}[ht!]
		\begin{center}
			\includegraphics[scale=0.65]{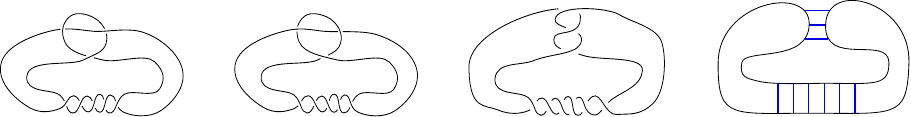}
		\end{center}
		\caption{$8_2$, $8_2^*$, a good diagram $D=P(-1,-1,-1,6)$ of $8_2^*$ and $s_AD$.}
		\label{FigureOchoDosEspecular}
	\end{figure}
		
	\begin{table}[ht!]
		\setlength\extrarowheight{2pt}
		\begin{tabular}{|c||c|c|c|c|c|c|c|c|c|}
			\hline
			\backslashbox{\!{\color{blue}$j$}\,{\tiny$q$}}{\!{\color{blue}$i$}\,{\tiny  $h$}} & {\color{blue}$1$} {\tiny $-2$}&{\color{blue}$2$} {\tiny $-1$} & {\color{blue}$3$} {\tiny $0$} & {\color{blue}$4$} {\tiny $1$} & {\color{blue}$5$} {\tiny $2$} & {\color{blue}$6$} {\tiny $3$} & {\color{blue}$7$} {\tiny $4$} & {\color{blue}$8$} {\tiny $5$}  & {\color{blue}$9$} {\tiny $6$ }\\
			\hline
			\hline
			{\color{blue} $17$} {\tiny 17} &   &   &   &   &   &   &   &   & $ \Rone $ \\
			\hline
			{\color{blue} $15$} {\tiny 15} &   &   &   &   &   &   &   & $ \Rone $ & $ {\color{blue}\Tone{2}} $ \\
			\hline
			{\color{blue} $13$} {\tiny 13} &   &   &   &   &   &   & $ \Rone $ & $ \Rone \oplus {\color{blue}\Tone{2}} $ &   \\
			\hline
			{\color{blue} $11$} {\tiny 11} &   &   &   &   &   & $ \Rmor{2} $ & $\Rone \oplus {\color{blue}\Tone{2}}$ &   &   \\
			\hline
			{\color{blue} $9$} {\tiny 9} &   &   &   &   & $ \Rone $ & $ \Rone \oplus {\color{purple}\Tone{2}} \oplus {\color{blue}\Tone{2}} $ &   &   &   \\
			\hline
			{\color{blue} $7$} {\tiny 7} &   &   &   & $ \Rone $ & $ \Rmor{2} \oplus {\color{blue}\Tone{2}} $ &   &   &   &   \\
			\hline
			{\color{blue} $5$} {\tiny 5} &   &   & $ \Rone $ & $ \Rone \oplus {\color{purple}\Tone{2}} $ &   &   &   &   &   \\
			\hline
			{\color{blue} $3$} {\tiny 3} &   & $ \Rone $ & $ \Rmor{2} $ &   &   &   &   &   &   \\
			\hline
			{\color{blue} $1$} {\tiny 1} &   & $ {\color{purple}\Tone{2}} $ &   &   &   &   &   &   &   \\
			\hline
			{\color{blue}\!\!\!$-1$} {\tiny $-1$} & $ \Rone $ &   &   &   &   &   &   &   &   \\
			\hline
		\end{tabular}
		\caption{Khovanov homology of the knot $8_2^*$.}
		\label{TableKnotEightTwoMirror}
	\end{table}
	
	On the one hand, {\color{purple}$\Tone{2}$} torsions in purple are explained by Theorem~\ref{Theorem3.2RPF}, by just taking $\mu = 1, 3$ and $5$ (the pairs $(0,1)$ or $(1,0)$, $(0,3)$ and $(0,5)$), corresponding to homological degrees $i=\mu+1=2,4$ and $6$. 
	On the other hand, {\color{blue}$\Tone{2}$} torsions in blue are explained by Theorem~\ref{TheoremGreat}. The pairs $(\mu_1, \mu_2) =(2,2)$, $(2,3)$  or $(3,2)$, $(2,4)$, $(2,5)$ or $(3,4)$ and $(2,6)$ provide torsion corresponding to the homological degrees $i=5, 6, 7, 8$ and $9$. Finally, the two torsion elements found for the homological degree $i=6$ are different according to Proposition~\ref{PropositionComparacionAnterioresNuevos}. We can easily draw the corresponding grid $G(3,6)$ and find out that the torsion components obtained from it are exactly those appearing in the table. 
		
\section{Pretzel links, $3$-braids and rational links}

In this section we apply our results to find that, in general, there is an enormous amount of $\mathbb{Z}_2$ torsion subgroups in the Khovanov homology of many pretzel links, closures of $3$-braids and rational links. With respect to pretzel links \cite{Lickorish}, we remark that, by the results in \cite{Asaeda} one can deduce that most of them have torsion (in the penultimate or antepenultimate row of the Khovanov homology table). The following result emphasizes the large amount of torsion subgroups (and recall that they can appear in any row):

\begin{corollary}\label{CorollaryPretzel}
	Let $a_1, \dots, a_n$ be integers different from zero. Let $P(a_1, \dots, a_n)$ be a pretzel diagram of a pretzel link $L$. Suppose that:
	\begin{enumerate}
		\item $a_l\not=1$ for any $l\in \{1, \dots, n\}$.
		\item There exists $i\in \{1, \dots, n\}$ such that $a_i\geq 3$.
		\item There are $j,k\in\{1, \dots, n\}$, $j\not= k$, such that $a_j, a_k<0$.
	\end{enumerate}
	Then there are at least $\displaystyle{\prod_{l/a_l>0}} \lfloor {\frac{a_l}{2}} \rfloor -1$ different torsion subgroups $\mathbb{Z}_2$ in the Khovanov homology of $L$.	
\end{corollary}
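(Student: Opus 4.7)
The plan is to apply Corollary~\ref{CorollaryOnlyOneLadder} (indeed Theorem~\ref{TheoremGreat}) to a carefully chosen initial Kauffman state $s_0$ on the pretzel diagram $D = P(a_1, \dots, a_n)$, and then invoke Remark~\ref{RemarkDistinguir} to count the resulting distinct $\mathbb{Z}_2$ torsion subgroups.

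First I would take $s_0$ to be the state that assigns the $A$-label to every crossing of a positive twist region and the $B$-label to every crossing of a negative twist region. Then in $s_0 D$ each positive entry $a_i > 0$ produces a blue ladder $H_i$ of height $h_i = a_i$ consisting of $a_i$ parallel blue scars, while each negative entry $a_j < 0$ contributes $|a_j|$ red scars and no blue scars. Hypothesis~1 ($a_l \neq 1$) guarantees $h_i \geq 2$ for every ladder, and hypothesis~2 (some $a_i \geq 3$) produces a ladder of height at least three.

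Next I would verify the two conditions of Corollary~\ref{CorollaryOnlyOneLadder}: that the ladder of height $\geq 3$ has periphery number one, and that in $s_1 D$ every red scar is a bichord. Both facts follow from a direct combinatorial inspection of how the outer arcs of the pretzel close up the cut ladders. Hypothesis~3, which provides \emph{two} negative twist regions, is exactly what forces each positive ladder to close into a single periphery circle when cut, and simultaneously what prevents a red scar from having both endpoints on a common circle of $s_1 D$: with only zero or one negative entry, one can produce examples where the two arcs at the sides of a ladder belong to different circles or where a red scar becomes a monochord. This verification is the main obstacle; everything else is then a mechanical application of the machinery of Section~\ref{SectionMain}. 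In fact, in this setup all ladders turn out to have periphery one, so Corollary~\ref{CorollaryOnlyOneLadder} applies with $t = k$ equal to the number of positive entries, and every admissible tuple $(\mu_1, \dots, \mu_k)$ with $2 \leq \mu_i \leq h_i$ and some $\mu_r$ even with $\mu_r < h_r$ yields an order-two torsion element $V(\mu_1, \dots, \mu_k)$.

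Finally I would count the distinct classes via Remark~\ref{RemarkDistinguir}: restricting to tuples $(\mu_1, \dots, \mu_k)$ in which \emph{every} $\mu_i$ is even makes all the corresponding classes pairwise different. Since each positive $a_l$ admits exactly $\lfloor a_l/2 \rfloor$ even values in $[2, a_l]$, the total number of such tuples is $\prod_{l\,:\, a_l > 0} \lfloor a_l / 2 \rfloor$. At most one of these (the tuple $(h_1, \dots, h_k)$, which arises only when every positive $a_l$ is even) fails the side condition ``some $\mu_r$ even with $\mu_r < h_r$'' required by Theorem~\ref{TheoremGreat}; discarding it gives the claimed lower bound $\prod_{l\,:\, a_l > 0} \lfloor a_l / 2 \rfloor - 1$ on the number of different $\mathbb{Z}_2$ torsion subgroups in the Khovanov homology of $L$.
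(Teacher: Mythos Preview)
Your approach is essentially the paper's own: the same initial state $s_0$ ($A$ on positive twist regions, $B$ on negative ones), the same identification of blue ladders with the positive entries, the same verification of the hypotheses of Theorem~\ref{TheoremGreat}, and the same use of Remark~\ref{RemarkDistinguir} on all-even tuples to obtain the count $\prod_{l:a_l>0}\lfloor a_l/2\rfloor - 1$. The only minor discrepancy is in how you distribute the roles of the hypotheses: the paper observes that \emph{one} negative entry already forces every blue ladder to have periphery number one (so hypothesis~3 is overkill for that part), while the second negative entry is what makes the red scars bichords in $s_1D$; your sentence suggesting that with a single negative entry one can get periphery number two is not quite right, but this does not affect the argument.
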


\begin{proof}
	Let $s_0$ be the initial Kauffman state that assigns $A$-labels to the positive crossings and $B$-labels to the negative crossings (see Figure~\ref{FigurePretzelInitialState}).
	\begin{figure}[ht!]
		\labellist
		\pinlabel {$a_i>0$} at -70 10
		\pinlabel {$a_i<0$} at 450 10
		\endlabellist	
		\begin{center}
			\includegraphics[scale=0.3]{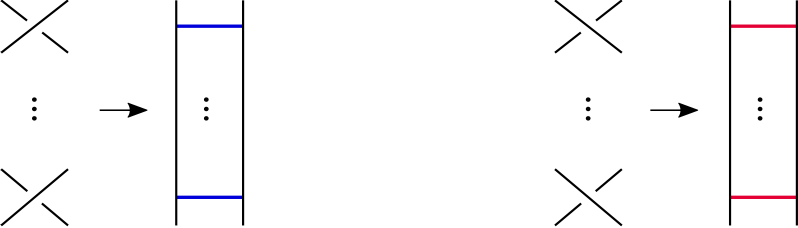}
		\end{center}
		\caption{Initial state for a pretzel diagram.}
		\label{FigurePretzelInitialState}
	\end{figure}
	
	In $s_0D$ we group the blue scars into $r$ ladders, corresponding to the positive entries $a_{l_1}, \dots, a_{l_r}$. All these ladders have periphery number equal to one, since there is at least one negative entry (see Figure~\ref{FigurePretzelPeripheryNumberOne}). There is also a ladder $H$ with height $h\geq 3$ by item {\it 2}. Finally, all the red scars are bichords when all the ladders are broken, by item {\it 3}. 
	\begin{figure}[ht!]
		\begin{center}
			\includegraphics[scale=0.4]{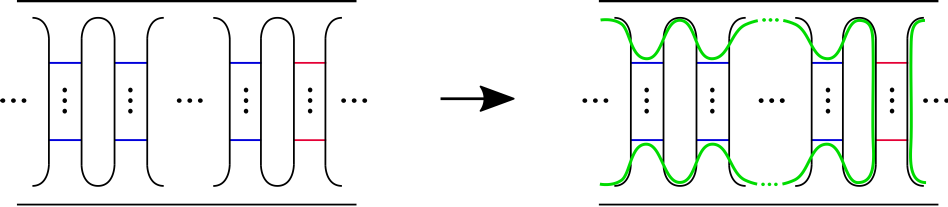}
		\end{center}
		\caption{Ladders have periphery number one.}
		\label{FigurePretzelPeripheryNumberOne}
	\end{figure}
	
	Theorem~\ref{TheoremGreat} guarantees then a torsion element for each upla $(\mu_1, \dots, \mu_r) \in \mathbb{Z}^r$, if $2\leq \mu_s \leq a_{l_s}$, $s=1, \dots, r$, and at least one $\mu_t$ is even and less than $a_{l_t}$. Moreover, by choosing only even numbers $\mu_s$, $s=1, \dots, r$,  all these uplas define different torsion elements, by Remark~\ref{RemarkDistinguir}.
\end{proof}

\begin{remark} \label{RemarkNotSharp}
	The bound given in Corollary~\ref{CorollaryPretzel} is not sharp at all. For example, for $P(5,-3,2,3,-2)$, the provided bound is just one, but Theorem~\ref{TheoremGreat} gives torsion elements for $(\mu_1, \mu_2, \mu_3)$ equals to $(2,2,2)$, $(2,2,3)$, $(4,2,2)$ and $(4,2,3)$, and all these elements are different from each other by Theorem~\ref{TheoremDistinguir}. 
\end{remark}

We now focus on $3$-braids. 

\begin{corollary}\label{CorollaryBraids}
	Let $L=\hat{\beta}$ be the closure of a braid $\beta$ with three strands, defined by the word $w = \sigma_1^{a_1} \sigma_2^{a_2} \dots \sigma_1^{a_{2t-1}} \sigma_2^{a_{2t}}$ where all the exponents are non zero. Assume the following: 
	\begin{enumerate}
		\item No exponent $a_i$ is equal to 1.
		\item There exists an exponent $a_j>2$ cyclically surrounded by positive ones. 
		\item Any negative exponent is cyclically surrounded by positive ones. 
	\end{enumerate}
	Then the Khovanov homology of $L$ has at least $\lfloor \frac{a_{i_1}}{2}\rfloor \dots \lfloor \frac{a_{i_r}}{2}\rfloor-1$ different torsion elements, where $a_{i_1},\dots, a_{i_r}$ are the positive exponents surrounded cyclically by positive exponents.
\end{corollary}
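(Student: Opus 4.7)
The plan is to apply Corollary~\ref{CorollaryOnlyOneLadder} to the diagram $D$ obtained as the closure of $\beta$, with the initial Kauffman state $s_0$ that assigns $A$ to every positive crossing and $B$ to every negative crossing. Each positive block $\sigma_j^{a_i}$ then produces a blue ladder $H_i$ of height $a_i\ge 2$ between its two active strands, and each negative block produces $|a_i|\ge 2$ parallel red scars; hypothesis (1) rules out the height-one case in both situations.

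The first key step is to show that every positive block cyclically surrounded by positive blocks has periphery number one. In a $3$-braid the blocks alternate between $\sigma_1$ and $\sigma_2$, so if block $i$ is $\sigma_j^{a_i}$ with both neighbours positive (and hence also cut when we cut all blue ladders), the bottom cap of block $i-1$ and the top cap of block $i$ share their middle-strand endpoint at the layer between them, and analogously at the bottom of block $i$. A direct combinatorial trace through the cut diagram then shows that the top and bottom caps of $H_i$ land in a common circle, giving periphery one. Hypothesis (2) therefore supplies a ladder of height at least three with periphery one, which is condition (1) of Corollary~\ref{CorollaryOnlyOneLadder}.

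For condition (2) of Corollary~\ref{CorollaryOnlyOneLadder}, I would argue that every red scar in $s_0D$ is a bichord in $s_1D$. Each red scar lies in a negative block which, by hypothesis (3), is cyclically flanked by two positive blocks. Breaking a ladder has the same effect on strand connectivity as cutting it, since the $B$-smoothing of a positive crossing cuts its two active strands horizontally; hence after breaking, the same connectivity analysis as in the previous step shows that the two active strands of the negative block lie in different circles inside that block. Consequently every red scar, whose endpoints sit on these two active strands, is a bichord.

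Finally I count distinct torsion elements. Let $a_{i_1},\dots,a_{i_r}$ be the positive exponents surrounded cyclically by positive ones, so that $H_{i_1},\dots,H_{i_r}$ are periphery-one ladders; we set $\mu_i=h_i$ for the remaining (periphery-two) ladders as prescribed by Corollary~\ref{CorollaryOnlyOneLadder}. By Remark~\ref{RemarkDistinguir}, every upla $(\mu_{i_1},\dots,\mu_{i_r})$ whose entries are all even, satisfy $2\le \mu_{i_s}\le a_{i_s}$, and include at least one $\mu_{i_s}<a_{i_s}$ produces a torsion element distinct from every other one obtained from Corollary~\ref{CorollaryOnlyOneLadder}. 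There are $\prod_{s=1}^{r}\lfloor a_{i_s}/2\rfloor$ even uplas, of which at most one (the all-maximal one, present only when every $a_{i_s}$ is even) fails the strict inequality, yielding the lower bound $\prod_{s=1}^{r}\lfloor a_{i_s}/2\rfloor-1$. The principal obstacle is the connectivity analysis underlying the periphery claim, since cutting all ladders in a closed $3$-braid creates identifications whose outcome depends on the full cyclic pattern of block signs.
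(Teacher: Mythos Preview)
Your proposal is correct and follows essentially the same route as the paper: both choose the initial state $s_0$ assigning $A$ to positive and $B$ to negative crossings, verify the hypotheses of Corollary~\ref{CorollaryOnlyOneLadder} (heights $\ge 2$ from hypothesis~(1), a periphery-one ladder of height $\ge 3$ from hypothesis~(2), red scars becoming bichords from hypothesis~(3)), identify the periphery-one ladders with the positive blocks cyclically surrounded by positives, and then count via Remark~\ref{RemarkDistinguir}. The only difference is presentational: the paper dispatches the periphery and bichord verifications by reference to a figure, whereas you sketch the strand-tracing argument in words and correctly observe that breaking a ladder and cutting it have the same effect on external connectivity.
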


\begin{proof}
	We check the hypothesis of Corollary~\ref{CorollaryOnlyOneLadder}, considering the initial state~$s_0$ that assigns $A$-labels to the positive crossings, $B$-labels to the negative crossings. Indeed, item {\it 1} in this corollary implies that the heights of all the blue ladders are greater than or equal to two; item {\it 2} implies the existence of a ladder with height at least 3 and periphery number 1 (Figure~\ref{FiguraBraids}, left); and item {\it 3} implies that all the red scars become bichords when broken the ladders (Figure~\ref{FiguraBraids},~right).
	
	\begin{figure}[ht!]
		\begin{center}
			\includegraphics[scale=0.45]{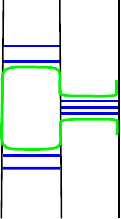}
			\hspace{3cm}	
			\includegraphics[scale=0.45]{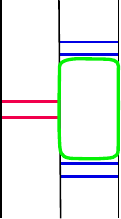}	
		\end{center}
		\caption{Large ladder with periphery number one; red scars are bichords.}
		\label{FiguraBraids}
	\end{figure}
	
	Finally, note that the ladders corresponding to the subscripts $i_1,\dots, i_r$ are precisely those with periphery number one (see again Figure~\ref{FiguraBraids}). Corollary~\ref{CorollaryOnlyOneLadder} and Remark~\ref{RemarkDistinguir} provide then the lower bound given for the number of order two torsion elements. 					
\end{proof}

In \cite{Sevilla}, the authors study the Khovanov homology of the closure of positive braids with three strands. Since conjugate braids close to the same link, they study the Khovanov tables up to conjugacy, finding that a certain block of the table is repeated a number of times that depends on the {\it infimum} $p$ of the braid, a parameter appearing in its Garside left normal form. When written the braid as $\Delta^p a_1\cdots a_l$, with factors $a_1, \dots, a_l$, they smooth the first crossing in $a_1$ via $A$ and $B$ smoothing, obtaining new diagrams $D_A$ and $D_B$, and apply the exact sequence of Khovanov homology in order to prove that there is an isomorphism between the corresponding Khovanov modules of $D$ and $D_A$, since the Khovanov homology of the adjacent modules for $D_B$ turn to be zero (which is proved by transforming $D_B$ in an appropriate alternating standard rational link diagram, while keeping trace of the writhe). Then they conclude by induction on the length of the braids, since $D_A$ is in the same conjugacy class as $D$, and contains one less crossing.

However, in several cases a part of the Khovanov table remains to be unknown. As a consequence of Theorem~\ref{TheoremGreat}, we can prove that there is torsion of order two in many of these unknown cells, provided $p=0$. For example, consider the braid $\beta = \sigma_1^{k_1}\sigma_2^2$ with $k_1 \geq 3$, whose Khovanov table is Table 31 in \cite{Sevilla}. Then, by Corollary~\ref{CorollaryBraids}, there is at least $\lfloor {\frac{k_1}{2}} \rfloor -1$ different torsion subgroups $\mathbb{Z}_2$. Moreover, all of them have homological degrees $h=i\geq 5$ by Theorem~\ref{TheoremGreat}, so they are in the unknown part (note that $i=h$ since $n=0$). For example, if $k_1=7$, we find torsion corresponding to the pairs $(\mu_1, \mu_2)$ equal to $(2,2)$, $(4,2)$ and $(6,2)$ with respective degrees $i=h=5,7$ and $9$. 

Next we concentrate in rational links \cite{Peter}. Since all rational links are alternating, we already know that they have torsion of order two. In the following example we will show that the presence of this torsion is enormous in general.

\begin{example}
	Let $D=D(a_1, \dots, a_m)$ be a standard diagram of a rational link~$L$. Assume that $a_i\geq 2$ for all $i\in \{1, \dots, m\}$ and there exists $j\in \{1, \dots, m\}$ such that $a_j\geq 3$. Then there are at least $\prod_{i=1}^m \left\lfloor \frac{a_i}{2}\right\rfloor -1$ different subgroups $\mathbb{Z}_2$ in the Khovanov homology of $L$. 
\end{example}

In order to see this, consider the initial state $s_0=s_A$. All the blue scars are then grouped into $m$ ladders, and one of them has at least three steps. Moreover, all these ladders have periphery number one (see Figure~\ref{FiguraRationalSADPeriphery}). Then the chain $V=V(\mu_1, \dots, \mu_m)$, where each $\mu_i$ is even, $2\leq \mu_i \leq a_i$ and there exists $r\in \{1, \dots, m\}$ such that $\mu_r<a_r$, determines an order two torsion element. The fact of choosing each $\mu_i$ to be even is not necessary, but it ensures that the homology elements obtained are different from each other, according to Remark~\ref{RemarkDistinguir}. This provides the above lower bound (not sharp at all) for the number of $\mathbb{Z}_2$ subgroups. For example, for $D(4,2,6)$ we can choose $(\mu_1, \mu_2, \mu_3)$ equal to
$(2,2,2)$, $(2,2,4)$, $(2,2,6)$, $(4,2,2)$ and $(4,2,4)$, but also $(2,2,3)$, etc.

\begin{figure}[ht!]
	\begin{center}
		\includegraphics[scale=0.4]{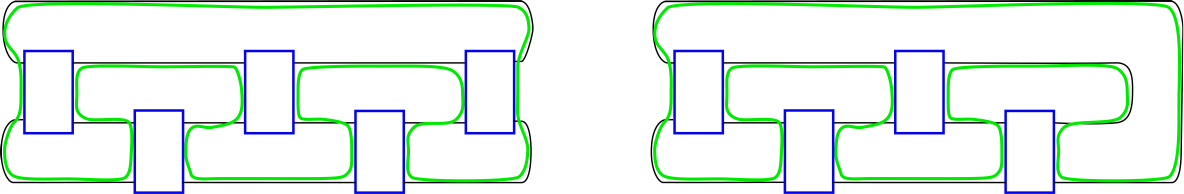}
	\end{center}
	\caption{In both cases, $m$ odd and even, the periphery number of all ladders is one in $s_AD$ if each $a_i>0$.}
	\label{FiguraRationalSADPeriphery}
\end{figure}

Finally, we show a general result for rational links without the restriction of having all the entries positive. The proof by Asaeda and Przytycki~\cite{Asaeda} of the fact that alternating links have torsion of order two examines cycles in $s_AD$, when $D$ is chosen to be an $A$-adequate diagram. The interest of the following result is to discover order two torsion in many rational links starting with a non semi-adequate diagram. 

\begin{proposition}
	Let $D=D(a_1, \dots, a_m)$ be a standard diagram of a rational link, where $a_i \in \mathbb{Z}$, $a_i\not= 0$ for all $i\in \{ 1, \dots, m\}$. Assume the following:
	\begin{enumerate}
		\item There is no $a_i$ equal to one.
		\item There exists a large positive entry surrounded by positive entries. Precisely, there exists $j\in \{ 2, \dots, m-1\}$ such that $a_j\geq 3$ and $a_{j-1}, a_{j+1} >0$, or $a_1 \geq 3$ and $a_2 >0$ or $a_m\geq 3$ and $a_{m-1} >0$.
		\item Any negative entry is surrounded by positive ones. Precisely, if $a_i<0$ with $i\in \{2, \dots, m-1\}$, then $a_{i-1}, a_{i+1} >0$; if $a_1<0$ then $a_2>0$, and if $a_m<0$ then $a_{m-1} >0$.
	\end{enumerate}
	Then the rational link represented by $D$ has torsion of order two in its Khovanov homology.
\end{proposition}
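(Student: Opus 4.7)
The plan is to apply Corollary~\ref{CorollaryOnlyOneLadder} to the initial Kauffman state $s_0$ that assigns an $A$-label to every positive crossing and a $B$-label to every negative crossing, exactly as was done in the preceding example on rational links with all entries positive, but now allowing a mixture of signs. Since each twist region of the standard rational diagram consists of $|a_i|$ parallel crossings, the scars of $s_0D$ split cleanly into blocks indexed by $i$: if $a_i>0$ we obtain a single blue ladder of height $a_i$, and if $a_i<0$ we obtain a family of $|a_i|$ parallel red scars. Condition~1 guarantees that every such blue ladder has height at least two, which is the requirement on the heights in Corollary~\ref{CorollaryOnlyOneLadder}.

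The next step is to verify the two hypotheses of Corollary~\ref{CorollaryOnlyOneLadder}. For hypothesis~1, condition~2 singles out a positive entry $a_j\geq 3$ whose adjacent entries (if they exist) are positive. The corresponding blue ladder $H_j$ then has height $a_j\geq 3$, and a local inspection of $s_0D$ around $H_j$, of the same kind as that illustrated in Figure~\ref{FiguraRationalSADPeriphery}, should show that the $A$-smoothings of the neighbouring twist regions (or, at the ends $j=1$ or $j=m$, the external closure of the rational tangle) connect the two extreme arcs of $H_j$ into a single circle, so the periphery number of $H_j$ is one. For hypothesis~2, every red scar of $s_0D$ sits inside a negative twist region which, by condition~3, is flanked by positive twist regions. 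When each blue ladder is broken by turning one of its steps red to produce $s_1D$, the resulting arcs on either side of the $|a_i|$ red scars of a negative entry lie in different circles, so each red scar becomes a bichord in $s_1D$.

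Once these two hypotheses are confirmed, Corollary~\ref{CorollaryOnlyOneLadder} immediately yields an order-two torsion element in $\underline{Kh}(D)$, and therefore in the Khovanov homology of the rational link $L$, as required.

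The main obstacle I expect is the geometric bookkeeping in the two verifications above, namely that the long blue ladder really has periphery number one and that the red scars really turn into bichords after breaking all the ladders. Both statements reduce to tracking how circles are formed as one traverses the standard rational diagram from one twist region to the next, taking care of the boundary cases $j=1$ and $j=m$ separately and of the parity of $m$ (which controls whether consecutive twist boxes are horizontal or vertical, as in Figure~\ref{FiguraRationalSADPeriphery}). This is a finite case analysis in the same spirit as those used for pretzel links in Corollary~\ref{CorollaryPretzel} and for closures of $3$-braids in Corollary~\ref{CorollaryBraids}, and it does not appear to require any new conceptual ingredient beyond Corollary~\ref{CorollaryOnlyOneLadder}.
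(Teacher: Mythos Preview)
Your proposal is correct and follows essentially the same approach as the paper: the paper's proof also chooses the initial state $s_0$ assigning $A$-labels to positive crossings and $B$-labels to negative ones, and then verifies the hypotheses of Corollary~\ref{CorollaryOnlyOneLadder} by the very local inspections you describe (recorded there only as references to figures). Your write-up is in fact more detailed than the paper's, which leaves the periphery-number and bichord checks entirely to the pictures.
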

\begin{proof}
	Considering the initial state $s_0$ suggested by Figure~\ref{FiguraCrossingsScars}, the result follows by direct inspection of the hypothesis of Corollary~\ref{CorollaryOnlyOneLadder}. See Figures~\ref{FiguraBecomeBichords} and \ref{FiguraRationalPeripheryNumberOne}.
	
	\begin{figure}[ht!]
		\begin{center}
			\includegraphics[scale=0.3]{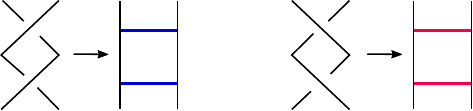}
		\end{center}
		\caption{Crossings and scars.}
		\label{FiguraCrossingsScars}
	\end{figure}
	
	\begin{figure}[ht!]
		\begin{center}
			\includegraphics[scale=0.30]{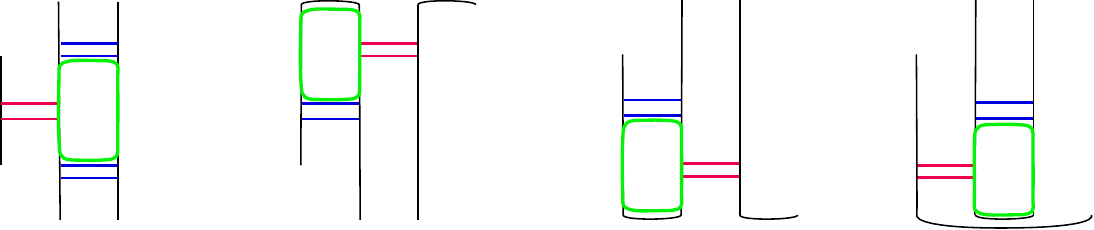}
		\end{center}
		\caption{The red scars become bichords.}
		\label{FiguraBecomeBichords}
	\end{figure}
	
	\begin{figure}[ht!]
		\begin{center}
			\includegraphics[scale=0.30]{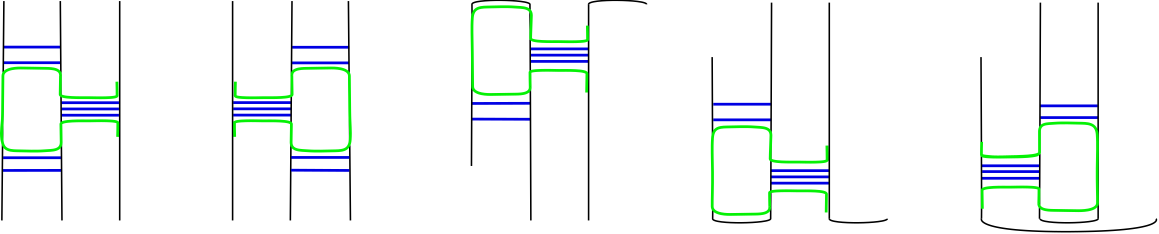}
		\end{center}
		\caption{Periphery Number One.}
		\label{FiguraRationalPeripheryNumberOne}
	\end{figure}
\end{proof}

\end{document}